\def\mathcal{\mathscr}
\def\Autoref#1{%
  \begingroup
  \edef\reserved@a{\cpttrimspaces{#1}}%
  \ifcsndefTF{r@#1}{%
    \xaftercsname{\expandafter\testreftype\@fourthoffive}
      {r@\reserved@a}.\\{#1}%
  }{%
    \ref{#1}%
  }%
  \endgroup
}
\def\testreftype#1.#2\\#3{%
  \ifcsndefTF{#1autorefname}{%
    \def\reserved@a##1##2\@nil{%
      \uppercase{\def\ref@name{##1}}%
      \csn@edef{#1autorefname}{\ref@name##2}%
      \autoref{#3}%
    }%
    \reserved@a#1\@nil
  }{%
    \autoref{#3}%
  }%
}
\declaretheorem[style=plain,numberwithin=section]{theorem}
\declaretheorem[style=plain,sibling=theorem]{lemma}
\declaretheorem[style=plain,sibling=theorem]{proposition}
\declaretheorem[style=definition,sibling=theorem]{definition}
\declaretheorem[style=remark,sibling=theorem]{remark}
\declaretheorem[style=remark,sibling=theorem]{example}
\newcommand{\numberthis}{\addtocounter{equation}{1}\tag{\theequation}}
\numberwithin{equation}{section}
\renewcommand{\d}{\,\mathrm{d}}
\DeclareMathOperator*{\E}{\,\mathbb{E}}
\DeclareMathOperator*{\limweak}{\textit{w}^*-\lim\,}
\DeclareMathOperator*{\R}{{\mathbb{R}}}
\DeclareMathOperator*{\Dom}{\mathrm{Dom}}
\renewcommand{\L}{\mathcal{L}}
\renewcommand{\O}{\mathcal{O}}
\def\namedlabel#1#2{\begingroup
    #2%
    \def\@currentlabel{#2}%
    \phantomsection\label{#1}\endgroup
}
\DeclareRobustCommand{\cev}[1]{%
  \mathpalette\do@cev{#1}%
}
\newcommand{\do@cev}[2]{%
  \fix@cev{#1}{+}%
  \reflectbox{$\m@th#1\vec{\reflectbox{$\fix@cev{#1}{-}\m@th#1#2\fix@cev{#1}{+}$}}$}%
  \fix@cev{#1}{-}%
}
\newcommand{\fix@cev}[2]{%
  \ifx#1\displaystyle
    \mkern#23mu
  \else
    \ifx#1\textstyle
      \mkern#23mu
    \else
      \ifx#1\scriptstyle
        \mkern#22mu
      \else
        \mkern#22mu
      \fi
    \fi
  \fi
}
\begin{document}
\title[]
{Limiting measure and stationarity of solutions to stochastic evolution equations with Volterra noise}
\author{P. \v{C}oupek}
\address{Charles University\\
Faculty of Mathematics and Physics\\
Sokolovsk\'{a} 83\\
Prague 8\\
Czech Republic}
\email{coupek@karlin.mff.cuni.cz}

\thanks{The author was supported by the Charles University, project GAUK No. 322715, and by the Czech Science Foundation, project GA\v{C}R No. 15-08819S}

\keywords{Volterra process, Stochastic evolution equation, Limiting measure, Stationary increments, Two-sided Rosenblatt process}
\subjclass[2010]{60H15, 60H05}

\begin{abstract}
Large-time behaviour of solutions to stochastic evolution equations driven by two-sided regular Volterra processes is studied. The solution is understood in the mild sense and takes values in a separable Hilbert space. Sufficient conditions for the existence of limiting measure and strict stationarity of the solution process are found and an example for which these conditions are also necessary is provided. The results are further applied to the heat equation perturbed by the two-sided Rosenblatt process.
\end{abstract}
\maketitle

\section{Introduction}
Consider the stochastic evolution equation 
	\begin{equation*}
		\left\{\begin{array}{cl}
					\d{X}_t & =AX_t\d{t} + \varPhi\d{B}_t, \quad t\geq 0,\\
					X_0 & = x,
				\end{array}\right.
	\end{equation*}
where $A$ generates a $C_0$-semigroup of bounded linear operators $S=(S(t), t\geq 0)$ acting on a separable Hilbert space and its mild solution which is defined by the variation of constants formula
	\begin{equation*}
		X^x_t := S(t)x + \int_0^tS(t-r)\varPhi\d{B}_r, \quad t\geq 0.
	\end{equation*}
The noise process is a two-sided Hilbert space valued $\alpha$-regular Volterra process $B$ (see \autoref{def:cylindrical_process}). It is shown (see \autoref{prop:limiting_measure}) that if the process $B$ has stationary and reflexive increments (see \autoref{def:stationarity_reflexivity}) and the equation satisfies certain stability conditions (see formula \eqref{eq:condition_on_inv_measure}), there is a limiting measure $\mu_\infty$ such that the law of $X^0_t$ and converges to $\mu_\infty$ as $t\rightarrow\infty$. Furthermore, we provide an example for which the stability condition is also a necessary one (see \autoref{ex:non_exp_stable}). Additionally, if the semigroup is strongly stable, we have (see \autoref{prop:limiting_measure_2}) that the law of $X_t^x$ tends to $\mu_\infty$ as $t\rightarrow\infty$ for each initial condition $x\in L^2(\Omega;V)$. Also, it is shown (see \autoref{prop:strict_stationarity}) that there exists an initial condition $x_\infty$, such that the solution $X^{x_\infty}$ is a strictly stationary process.

Volterra processes have been considered in the pioneering work \cite{AlosMazNua01} where the authors considered Gaussian Volterra processes (see also \cite{BauNua03,ErrEss09, Hida60}). Regular Volterra processes which might not be Gaussian and stochastic evolution equations driven by them were studied in the literature as well. In particular, existence and regularity results were given in \cite{BonTud11, CouMas16, CouMasOnd17, CouMasSnup17} and the present paper can be viewed as a continuation of the work. For specific cases of the driving noise, stationarity and large-time behavior of the solutions have been already treated in the literature (see e.g. \cite{DunMasDun02, MasNua03, MasPos07, MasPos08} and others for equations driven by the fractional Brownian motion (fBm)).

It is not a priori clear how the two-sided Volterra processes should be defined. We propose such a definition (\autoref{def:Volterra_process}) after analysis of two main examples of two-sided $\alpha$-regular Volterra processes - the fractional Brownian motion with Hurst parameter $H\in (1/2,1)$ (see e.g. \cite{AlosNua03, DecrUstu99, MandelbrotVanNess} for its definition and properties) and the Rosenblatt process (see e.g. \cite{Taqqu11, Tud08} for its definition and properties).

The paper is organized as follows.

In \autoref{sec:prelim}, we define two-sided $\alpha$-regular Volterra processes and give two examples - the (two-sided) fBm of Hurst parameter $H\in (1/2,1)$ and the (two-sided) Rosenblatt process. Then we modify the already existing stochastic integral with respect to one-sided $\alpha$-regular Volterra processes to the case when the integrator is two-sided and give basic properties of the integral. \Autoref{sec:limiting_measure} contains the main results of the paper. In particular, we find sufficient conditions for the existence of a limiting measure (\autoref{prop:limiting_measure} and \autoref{prop:limiting_measure_2}) and show the existence of an initial condition $x_\infty$ such that the mild solution $X^{x_\infty}$ is a stationary process (\autoref{prop:strict_stationarity}). The paper is concluded with two examples in \autoref{sec:examples}. The first is an example of a stochastic evolution equation for which the sufficient condition from \autoref{prop:limiting_measure} is also a necessary one and the second example is the stochastic heat equation driven by the two-sided space-time Rosenblatt process.

\section{Preliminaries}
\label{sec:prelim}

\subsection{Two-sided Volterra processes}
\label{sec:VP}
Let $K:{\R}^2\rightarrow\R$ be a kernel such that 
	\begin{itemize}
		\itemsep-0.5em
		\item $K(t,r)=0$ on $\{t<r\}$ and $\lim_{t\rightarrow r+} K(t,r) = 0$ for every $r\in\R$.
		\item $K(\cdot,r)$ si continuously differentiable in $(r,\infty)$ for every $r\in\R$. 
		\item There is an $\alpha\in (0,\frac{1}{2})$ such that 
			\begin{equation}
			\label{eq:kernel_estimate}
				\left|\frac{\partial K}{\partial u}(u,r)\right|\lesssim (u-r)^{\alpha-1}
			\end{equation}				
			 on $\{r<u\}$.
	\end{itemize}
Throughout the paper, $A\lesssim B$ means that there is a finite positive constant $C$ such that $A\leq CB$ uniformly. Such a function $K$ is called an \textit{$\alpha$-regular Volterra kernel} in the sequel (cf. \cite{CouMas16} and \cite{CouMasOnd17} where a slightly different estimate on the kernel is considered). Under these conditions, we can define
	\begin{equation}
	\label{eq:definition_of_R}
		R(s_1,t_1,s_2,t_2):= \int_{\R}\left(K(t_1,r)-K(s_1,r)\right)\left(K(t_2,r)-K(s_2,r)\right)\d{r}
	\end{equation}
which is finite for every $s_1,t_1,s_2,t_2\in\R$.

\begin{definition}
\label{def:Volterra_process}
A stochastic process $b=(b_t, t\in\R)$ is an \textit{$\alpha$-regular Volterra process} if it is centred, $b_0=0$, and such that 
	\begin{equation}
	\label{eq:generalized_VP_increments}
		\E(b_{t_1}-b_{s_1})(b_{t_2}-b_{s_2}) = R(s_1,t_1,s_2,t_2)
	\end{equation}
for every $s_1,s_2,t_2,t_2\in\R$, where $R$ is defined by \eqref{eq:definition_of_R} with an $\alpha$-regular Volterra kernel $K$. 
\end{definition}

\begin{remark}
Note that condition \eqref{eq:generalized_VP_increments} together with the properties of the kernel $K$ already imply that the process $b$ from \autoref{def:Volterra_process} has a version with $\varepsilon$-H\"{o}lder continous sample paths for every $\varepsilon\in (0,\alpha)$. This follows by using \eqref{eq:cov_increments} and (i) of \autoref{lem:inner_prod} below. In particular, for $t>s$, we obtain
	\begin{equation*}
		\E(b_t-b_s)^2 \lesssim \int_s^t\int_s^t|u-v|^{2\alpha-1}\d{u}\d{v} =\frac{1}{\alpha(1+2\alpha)} (t-s)^{1+2\alpha}
	\end{equation*}
and use the Kolmogorov continuity theorem. We always consider this continuous version.
\end{remark}

\begin{remark}
The condition \eqref{eq:generalized_VP_increments} is an analogue of the condition 
	\begin{equation*}
		{\E}b_tb_s = \int_0^{s\wedge t}K(t,r)K(s,r)\d{r}, \quad s,t\geq 0,
	\end{equation*}
for a one-sided Volterra process $b=(b_t,t\geq 0)$. See \cite{CouMas16, CouMasOnd17, CouMasSnup17} for the precise conditions on $K$ in the one-sided case.
\end{remark}

The existence of limiting measure and stationarity of solutions to stochastic evolution equations will be proved for equations driven by Volterra processes whose increments are stationary and reflexive. Let us state precisely what we mean by these two notions in the finite-dimensional case. 

\begin{definition}
\label{def:stationarity_reflexivity}
Let $d\geq 1$. We say that an ${\R}^d$-valued stochastic process $Y=(Y_t, t\in\R)$ has 
	\begin{itemize}
	\itemsep0em
		\item \textit{stationary increments} if for every $n\in\mathbb{N}$ and every $s_i, t_i\in\R$, $s_i<t_i$, $i=1,2,\ldots, n$, we have that the following holds for every $h\in\R$:
			\begin{equation}
			\label{eq:stat_increments}
			\begin{split}
				Law (Y_{t_1}-Y_{s_1}, Y_{t_2}-Y_{s_2}, \ldots, Y_{t_n}-Y_{s_n})  & = \\
						&  \hspace{-5.2cm}= Law(Y_{t_1+h}-Y_{s_1+h},Y_{t_2+h}-Y_{s_2+h}, \ldots, Y_{t_n+h}-Y_{s_n+h});
			\end{split}
			\end{equation}
		\item \textit{reflexive increments} if for every $n\in\mathbb{N}$ and every $s_i, t_i\in\R$, $s_i<t_i$, $i=1,2,\ldots, n$, we have that 
			\begin{align*}
		Law (Y_{t_1}-Y_{s_1}, Y_{t_2}-Y_{s_2}, \ldots, Y_{t_n}-Y_{s_n}) & = \\
						& \hspace{-4cm} = Law(Y_{-s_1}-Y_{-t_1},Y_{-s_2}-Y_{-t_2}, \ldots, Y_{-s_n}-Y_{-t_n}).
			\end{align*}
	\end{itemize}
\end{definition}

\begin{remark}
The above definition of stationary increments is stronger than strict stationarity of the increment process $(Y_{t+h}-Y_t, t\in\R)$ for every $h\geq 0$. In fact, in \autoref{def:stationarity_reflexivity}, we allow each increment to be of different length $h_i:=t_i-s_i$. This stronger concept is needed in \autoref{lem:law_of_integrals} where we cannot assume equidistant partitions while approximating a general integrand $f$.
\end{remark}

\begin{remark}
Note that the notion of stationary increments from \autoref{def:stationarity_reflexivity} does not imply reflexivity of the increments.
\end{remark}

We now give two examples of two-sided $\alpha$-regular Volterra processes with stationary and reflexive increments, namely, the (two-sided) fractional Brownian motion (fBm) and the (two-sided) Rosenblatt process.

\begin{example}
Recall the following representation of the two-sided fractional Brownian motion (see \cite{MandelbrotVanNess} or \cite{Taqqu11}):
	\begin{equation}
	\label{eq:fBm_def}
		W^H_t = C_H\int_{\R}\left((t-r)_{+}^{H-\frac{1}{2}}-(-r)_{+}^{H-\frac{1}{2}}\right)\d{W}_r
	\end{equation}
where $W=(W_t, t\in\R)$ is the two-sided standard Wiener process and $C_H$ is a normalizing constant such that ${\E}(W_1^H)^2=1$, i.e.
	\begin{equation*}
		C_H:= \sqrt{\frac{2H}{\left(H-\frac{1}{2}\right)\mathrm{B}\left(H-\frac{1}{2},2-2H\right)}}
	\end{equation*}
with $\mathrm{B}$ being the Beta function. Let us assume that $H\in (1/2,1)$. If we define 
	\begin{equation}
	\label{eq:fBm_kernel}
		K^H(t,r):=c_H\int_r^t(u-r)^{H-\frac{3}{2}}\d{u}, \quad -\infty<r<t,
	\end{equation}
with $c_H:=C_H(H-\frac{1}{2})$, then we have 
	\begin{equation*}
		W_t^H = \int_{\R}\left(K^H(t,r)-K^H(0,r)\right)\d{W}_r.
	\end{equation*}
Hence, the increments of the two-sided fBm of $H>\frac{1}{2}$ satisfy
	\begin{equation*}
		W_t^H-W_s^H = \int_{\R} (K^H(t,r)-K^H(s,r))\d{W}_r, \quad s,t\in\R.
	\end{equation*}
Two immediate facts follow from this representation. First, we see that the two-sided fBm of $H>\frac{1}{2}$ is in fact a Volterra process as defined in \autoref{def:Volterra_process}. Second, for $-\infty <s_1<t_1<\infty$ and $-\infty < s_2<t_2<\infty$, we have that
	\begin{equation}
	\label{eq:cov_increments}
		\E(W^H_{t_1}-W^H_{s_1})(W^H_{t_2}-W^H_{s_2}) = H(2H-1)\int_{s_1}^{t_1}\int_{s_2}^{t_2}|u-v|^{2H-2}\d{u}\d{v}.
	\end{equation}
The last equality gives
	\begin{equation*}
		\E(W_t^H-W_s^H)^2 = |t-s|^{2H}, \quad s,t\in\R,
	\end{equation*}
which allows us to recover the covariance function 
	\begin{equation}
	\label{eq:cov_fBm}
		\E W^H_sW^H_t = \frac{1}{2}\left(|s|^{2H}+|t|^{2H}-|t-s|^{2H}\right), \quad s,t\in\R.
	\end{equation}
It follows, moreover, that $W^H$ has stationary and reflexive increments. See also \cite{DecrUstu99} for its further properties.
\end{example}

\begin{remark}
Notice that the formula \eqref{eq:fBm_def} could be written (if the integrals converged) as
	\begin{equation*}
		W^H_t = C_H \int_{-\infty}^t(t-r)^{H-\frac{1}{2}}\d{W}_r - C_H\int_{-\infty}^0(-r)^{H-\frac{1}{2}}\d{W}_r.
	\end{equation*}
Thus, as suggested in \cite[Remark 3.4]{Hai05}, the process $W_t^H$ should rather be seen as a convergent difference of two divergent integrals $\tilde{W}^H_t-\tilde{W}^H_0$ where $\tilde{W}^H_t$ is given by
	\begin{equation*}
		\tilde{W}^H_t := C_H\int_{-\infty}^t(t-r)^{H-\frac{1}{2}}\d{W}_r = \int_{-\infty}^tK^H(t,r)\d{W}_r.
	\end{equation*}
\end{remark}

\begin{example}
\label{ex:Rosenblatt}
Similarly as in the case of the fBm above, we may also extend the Rosenblatt process to the whole real line. Recall the definition of the (one-sided) Rosenblatt process (see \cite{Taqqu11} or \cite{Tud08}). Let $H\in (1/2,1)$ and 
	\begin{equation*}
		R_t^H := A_H\int_{{\R}^2}^{\prime}\left(\int_0^t(u-y_1)_{+}^{-\frac{2-H}{2}}(u-y_2)_{+}^{-\frac{2-H}{2}}\d{u}\right)\d{W}_{y_1}\d{W}_{y_2}, \quad t\geq 0,
	\end{equation*}
where $A_H$ is a normalizing constant such that ${\E}(R_t^H)^2=1$, i.e.
	\begin{equation*}
		A_H := \frac{\sqrt{\frac{H}{2}(2H-1)}}{\mathrm{B}\left(\frac{H}{2},1-H\right)} =: \frac{\sigma}{\mathrm{B}\left(\frac{H}{2},1-H\right)}
	\end{equation*}
with $\mathrm{B}$ being the Beta function. The double integral is the Wiener-It\^{o} multiple integral of order $2$ with respect to the two-sided standard Wiener process $W=(W_t, t\in\R)$ where the prime means that the integration excludes the diagonal $y_1=y_2$ (see \cite{PecTaq11}). The inner integral can be written as the difference
	\begin{equation*}
		A_H\int_0^t(u-y_1)_{+}^{-\frac{2-H}{2}}(u-y_2)_{+}^{-\frac{2-H}{2}}\d{u} = K^H(t, y_1,y_2) - K^H(0, y_1, y_2)
	\end{equation*}
where 
	\begin{equation*}
		K^H(t, y_1,y_2) := A_H\int_{y_1\vee y_{2}}^t(u-y_1)^{-\frac{2-H}{2}}(u-y_2)^{-\frac{2-H}{2}}\d{u}.
	\end{equation*}
Hence, in order to extend the definition of the Rosenblatt process also for negative values of $t$, we define it via its increments as 
	\begin{equation*}
		R_t^H-R_s^H:=\int_{{\R}^2}^\prime\left(K(t,y_1,y_2)-K(s,y_1,y_2)\right)\d{W}_{y_1}\d{W}_{y_2}, \quad s,t\in\R
	\end{equation*}
and, in particular, one obtains $R_t^H$ by taking $s=0$ in the above definition. Let $n\in\mathbb{N}$ and $t_i, s_i\in\R$ such that $s_i<t_i$ for $i=1,2,\ldots, n$. Similarly as in the one-sided case, the distribution of the vector $(R^H_{t_1}-R^H_{s_1}, R^H_{t_2}-R^H_{s_2}, \ldots, R^H_{t_n}-R^H_{s_n})$ is determined by the distribution of the random variable 
	\begin{equation*}
		R:=\sum_{i=1}^n\theta_i(R^H_{t_i}-R^H_{s_i}) = \int_{{\R}^2}^\prime\left(\sum_{i=1}^n\theta_i(K(t_i,y_1,y_2)-K(s_i,y_1,y_2))\right)\d{W}_{y_1}\d{W}_{y_2}.
	\end{equation*}
Notice that the sum inside the integral is a symmetric function in the variables $y_1$ and $y_2$ and since this is a second order multiple integral, its distribution is determined by its cumulants. In particular, using formula (18) from \cite{Taqqu11}, we have that $\kappa_1(R) = 0$ and
	\begin{equation*}
		\kappa_k(R) = 2^{k-1}(k-1)!\sigma^k \sum_{r_1,\ldots, r_k\in\{1,\ldots,n\}}\theta_{r_1}\ldots\theta_{r_k}S((s_{r_1},t_{r_1}), \ldots, (s_{r_1},t_{r_1})), \quad k=2,3,\ldots
	\end{equation*}
where 
	\begin{align*}
		S((s_{r_1},t_{r_1}), \ldots, (s_{r_1},t_{r_1})) & := \\
		& \hspace{-4cm} := \int_{s_{r_1}}^{t_{r_1}}\int_{s_{r_2}}^{t_{r_2}}\cdots\int_{s_{r_k}}^{t_{r_k}}|x_1-x_2|^{H-1} |x_2-x_3|^{H-1}\cdots |x_{k-1}-x_k|^{H-1}|x_k-x_1|^{H-1} \d{x}_k\ldots\d{x}_1,
	\end{align*}
(cf. \cite[section 4, formulas (12), (13) and (15)]{Taqqu11}). The cumulants will not change if we consider $t_i+h$ and $s_i+h$ instead of $t_i$ and $s_i$, respectively and, therefore, the two-sided Rosenblatt process has stationary increments. Similar argument shows that its increments are reflexive as well. Moreover, it follows that for $-\infty <s_1<t_1<\infty$ and $-\infty < s_2<t_2<\infty$, we have that
	\begin{equation*}
		\E(R^H_{t_1}-R^H_{s_1})(R^H_{t_2}-R^H_{s_2}) = H(2H-1)\int_{s_1}^{t_1}\int_{s_2}^{t_2}|u-v|^{2H-2}\d{u}\d{v}
	\end{equation*}
and we see that the increments of the Rosenblatt process are correlated in the same way is the increments of the fBm (cf. formula \eqref{eq:cov_increments}). Hence, $R^H$ is also a two-sided Volterra process with the kernel $K^H$ given by \eqref{eq:fBm_kernel}. In particular, the covariance of $R^H$ is given by \eqref{eq:cov_fBm}. 
\end{example}

\begin{remark}
As in the case of the fBm, one should in fact think of $R_t^H$ as the difference $\tilde{R}^H_t-\tilde{R}^H_0$, where 
	\begin{equation*}
\tilde{R}^H_t:= c_H\int_{-\infty}^t\int_{-\infty}^t\left(\int_{y_1\vee y_2}^t(u-y_1)^{-\frac{2-H}{2}}(u-y_2)^{-\frac{2-H}{2}}\d{u}\right)\d{W}_{y_1}\d{W}_{y_2}.
	\end{equation*}
Of course, similarly as in the case of the fBm, this expression does not make sense, since the integrand is not square-integrable. 
\end{remark}

\subsection{Stochastic integration}
Let $(V, \langle\cdot,\cdot\rangle_V)$ be a separable Hilbert space. Let $b=(b_t, t\in \R)$ be a two-sided Volterra process with a kernel $K$. Denote by $\mathcal{E}(\R;V)$ the set of $V$-valued step functions on $\R$, i.e. $f\in\mathcal{E}(\R;V)$ satisfies
	\begin{equation*}
		f = \sum_{j=1}^{n} f_j\bm{1}_{[t_{j-1},t_j)}
	\end{equation*}
where $n\in\mathbb{N}$, $-\infty <t_0<t_1<\ldots <t_n<\infty$ and $f_j\in V$ for all $j=1,2,\ldots,n$. Note that we identify functions equal almost everywhere. Consider the linear mapping $i:\mathcal{E}(\R;V)\rightarrow L^2(\Omega;V)$ given by
	\begin{equation*}
		i:\quad f:=\sum_{j=1}^n f_i\bm{1}_{[t_{j-1},t_j)}\quad \longmapsto \quad\sum_{j}f_j(b_{t_{j}}-b_{t_{j-1}})=:i(f)
	\end{equation*}
and define the operator $\mathcal{K}^*: \mathcal{E}(\R;V)\rightarrow L^2(\R;V)$ by
	\begin{equation*}
		(\mathcal{K}^*f)(r):=\int_r^\infty f(u)\frac{\partial K}{\partial u}(u,r)\d{u}, \quad r\in \R.
	\end{equation*}
For simplicity, it is assumed here that $\mathcal{K}^*$ is injective. If this is not the case, then the quotient space $\tilde{\mathcal{E}}(\R;V):=\mathcal{E}(\R;V)/\ker\mathcal{K}^*$ may be considered after lifting $\mathcal{K}^*$ to $\tilde{\mathcal{E}}(\R;V)$ (cf. \cite{CouMas16}). Formula \eqref{eq:generalized_VP_increments} implies that 
	\begin{equation}
	\label{eq:ito_isometry}
		\|i(f)\|_{L^2(\Omega;V)} = \|\mathcal{K}^*f\|_{L^2(\R;V)}
	\end{equation}
for $f\in\mathcal{E}(\R;V)$. Now, have $\mathcal{E}(\R;V)$ completed under the inner product 
	\begin{equation}
	\label{eq:D_norm}
		\langle f,g\rangle_{\mathcal{D}} := \langle\mathcal{K}^*f,\mathcal{K}^*g\rangle_{L^2(\R,V)},
	\end{equation}
denote the completion by $\mathcal{D}(\R;V)$ and extend $\mathcal{K}^*$ to $(\mathcal{D}(\R;V),\langle\cdot,\cdot\rangle_{\mathcal{D}})$ which is now a Hilbert space. This in turn extends $i$ to a linear isometry between $\mathcal{D}(\R;V)$ and a closed linear subspace of $L^2(\Omega;V)$. The space $\mathcal{D}(\R;V)$ is viewed as the space of admissible integrands and, for $f\in\mathcal{D}(\R;V)$, the random variable $i(f)$ is the \textit{stochastic integral} of $f$ with respect to the Volterra process $b$. Whenever necessary we will also use the symbol
	\begin{equation*}
		\int f\d{b} := i(f).
	\end{equation*}

Since $\mathcal{D}(I;\R)$ can be a very large space, its elements might not be functions. The following lemma shows that the Lebesgue-Bochner space $L^\frac{2}{1+2\alpha}(\R;V)$ can be viewed as a subspace of $\mathcal{D}(\R;V)$.

\begin{lemma}
We have that the space $L^\frac{2}{1+2\alpha}(\R;V)$ is continuously embedded in $\mathcal{D}(\R;V)$.
\end{lemma}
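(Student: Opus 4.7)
The plan is to obtain the embedding by bounding the $\mathcal{D}$-norm of a step function $f$ directly in terms of its $L^{2/(1+2\alpha)}$-norm, using the kernel estimate \eqref{eq:kernel_estimate} and a fractional integral (Hardy--Littlewood--Sobolev) inequality, and then extending by density.

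First, I would estimate $\mathcal{K}^\ast f$ pointwise. For $f\in\mathcal{E}(\R;V)$, the kernel bound \eqref{eq:kernel_estimate} yields
\begin{equation*}
\|(\mathcal{K}^\ast f)(r)\|_V \;\le\; \int_r^\infty \|f(u)\|_V\,\Bigl|\tfrac{\partial K}{\partial u}(u,r)\Bigr|\,\d u \;\lesssim\; \int_r^\infty \frac{\|f(u)\|_V}{(u-r)^{1-\alpha}}\,\d u,
\end{equation*}
which is (up to a multiplicative constant) the Riesz/Liouville fractional integral $I^\alpha(\|f\|_V)(r)$ applied to the scalar function $r\mapsto \|f(r)\|_V$.

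Next I would apply the Hardy--Littlewood--Sobolev inequality on $\R$: for any $p,q\in(1,\infty)$ satisfying $\tfrac{1}{p}-\tfrac{1}{q}=\alpha$, one has $\|I^\alpha g\|_{L^q(\R)}\lesssim \|g\|_{L^p(\R)}$. Solving $\tfrac{1}{p}-\alpha=\tfrac{1}{2}$ gives $p=\tfrac{2}{1+2\alpha}$, which lies in $(1,2)$ precisely because $\alpha\in(0,\tfrac{1}{2})$. Applying this with $g(u)=\|f(u)\|_V$ and squaring the pointwise bound, I obtain
\begin{equation*}
\|f\|_{\mathcal{D}}^{2} \;=\; \|\mathcal{K}^\ast f\|_{L^2(\R;V)}^{2} \;\lesssim\; \bigl\|\, \|f\|_V\,\bigr\|_{L^{2/(1+2\alpha)}(\R)}^{2} \;=\; \|f\|_{L^{2/(1+2\alpha)}(\R;V)}^{2}
\end{equation*}
for every step function $f\in\mathcal{E}(\R;V)$.

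Finally, since $\mathcal{E}(\R;V)$ is dense in $L^{2/(1+2\alpha)}(\R;V)$ (the exponent lies in $[1,\infty)$), any $f\in L^{2/(1+2\alpha)}(\R;V)$ can be approximated by step functions $f_n$ in the $L^{2/(1+2\alpha)}$-norm; the estimate above shows $(f_n)$ is Cauchy in $(\mathcal{E}(\R;V),\langle\cdot,\cdot\rangle_{\mathcal{D}})$, hence convergent in $\mathcal{D}(\R;V)$, and the inequality persists in the limit, producing a continuous linear embedding $L^{2/(1+2\alpha)}(\R;V)\hookrightarrow\mathcal{D}(\R;V)$. The only nontrivial step is recognizing the relevant operator as a fractional integral and invoking Hardy--Littlewood--Sobolev with the correct exponent relation; everything else is a standard density argument.
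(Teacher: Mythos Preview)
Your proof is correct and follows essentially the same route as the paper: bound $\mathcal{K}^*f$ pointwise by a fractional integral using the kernel estimate \eqref{eq:kernel_estimate}, then invoke the Hardy--Littlewood--Sobolev inequality (which the paper cites as \cite[Theorem 5.3]{SamKilMar93}) with the exponent pair $p=\tfrac{2}{1+2\alpha}$, $q=2$, and conclude by density of step functions. The paper's proof is simply a terser version of exactly what you wrote.
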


\begin{proof}
Let $f\in\mathcal{E}(\R;V)$. Then
	\begin{equation*}
		\|f\|_{\mathcal{D}}=\|\mathcal{K}^*f\|_{L^2(\R;V)}\lesssim \|I_{-}^\alpha(f)\|_{L^2(\R;V)} \lesssim \|f\|_{L^\frac{2}{1+2\alpha}(\R;V)}
	\end{equation*}
by the Fubini theorem and \cite[Theorem 5.3]{SamKilMar93}. Here, $I^\alpha_{-}$ denotes the left-sided fractional integral on the real axis. The claim follows by standard approximation.
\end{proof}

\begin{remark}
Let $-\infty<s<t<\infty$. Then the definite integral $i_{s,t}(f)$ for $f\in L^\frac{2}{1+2\alpha}_{\mathrm{loc}}(\R;V)$ is defined as
	\begin{equation*}
		i_{s,t}(f) := \int_s^tf\d{b}:= i(\bm{1}_{[s,t)}f).
	\end{equation*}
\end{remark}

The following lemma will become useful in various calculations in the sequel.

\begin{lemma}
\label{lem:inner_prod} 
Set
	\begin{equation*}
		\phi(u,v) := \int_{-\infty}^{u\wedge v}\frac{\partial K}{\partial u}(u,r)\frac{\partial K}{\partial v}(v,r)\d{r}.
	\end{equation*}
The following claims hold:
	 \begin{enumerate}
	 	\item[(i)] For $u\neq v$, we have that 
					\begin{equation*}	 		
	 					\phi(u,v) \lesssim |u-v|^{2\alpha-1}.
	 				\end{equation*}
	 	\item[(ii)] Let $s_1<t_1$ and $s_2<t_2$. Let further $f\in L^\frac{2}{1+2\alpha}(s_1,t_1;V)$, $g\in L^\frac{2}{1+2\alpha}(s_2,t_2;V)$. Then 
					\begin{equation*}
						\langle i_{s_1,t_1}(f),i_{s_2,t_2}(g)\rangle_{L^2(\Omega; V)} = \int_{s_2}^{t_2}\int_{s_1}^{t_1}\langle f(u), g(v)\rangle_V\phi(u,v)\d u\d v.
					\end{equation*}
					In particular, we have that 
						\begin{equation*}
							\E(b_{t_1}-b_{s_1})(b_{t_2}-b_{s_2}) = \int_{s_1}^{t_1}\int_{s_2}^{t_2}\phi(u,v)\d u\d v.
						\end{equation*}
	 	\item[(iii)] Let $-\infty <s <t <\infty$ and $h\in V$ and $f\in L^\frac{2}{1+2\alpha}(s,t;V)$. Then 
	 				\begin{equation*}
	 					\left\langle h, \int_s^t f(r)\d{b}_{r}\right\rangle_V = \int_s^t\langle h, f(r)\rangle_V\d{b}_r.
	 				\end{equation*}
	 \end{enumerate}
\end{lemma}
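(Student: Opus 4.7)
The proof splits naturally into three parts, with (i) providing the quantitative ingredient that makes (ii) work, and (iii) being a short density argument.

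For (i), I would assume without loss of generality that $u<v$, so that $u\wedge v=u$. Using the kernel estimate \eqref{eq:kernel_estimate} gives
\begin{equation*}
|\phi(u,v)|\lesssim\int_{-\infty}^{u}(u-r)^{\alpha-1}(v-r)^{\alpha-1}\d r,
\end{equation*}
and the substitution $r=u-(v-u)t$ turns this into $(v-u)^{2\alpha-1}\int_{0}^{\infty}t^{\alpha-1}(1+t)^{\alpha-1}\d t$. The latter integral equals $\mathrm{B}(\alpha,1-2\alpha)$, which is finite precisely because $\alpha\in(0,\tfrac{1}{2})$.

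For (ii), I would first prove the identity for step functions $f,g\in\mathcal{E}(\R;V)$, where it reduces to a computation: by the polarisation of the isometry \eqref{eq:ito_isometry} one has $\langle i(f),i(g)\rangle_{L^{2}(\Omega;V)}=\langle\mathcal{K}^{*}f,\mathcal{K}^{*}g\rangle_{L^{2}(\R;V)}$, and expanding $\mathcal{K}^{*}$ and applying Fubini (legal for step functions, as the inner integral is on a bounded set) produces
\begin{equation*}
\int_{\R}\int_{\R}\langle f(u),g(v)\rangle_{V}\Bigl(\int_{-\infty}^{u\wedge v}\tfrac{\partial K}{\partial u}(u,r)\tfrac{\partial K}{\partial v}(v,r)\d r\Bigr)\d u\d v,
\end{equation*}
which is the claimed formula. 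To pass from step functions to $f\in L^{2/(1+2\alpha)}(s_{1},t_{1};V)$ and $g\in L^{2/(1+2\alpha)}(s_{2},t_{2};V)$, I invoke the preceding embedding lemma: the left-hand side is continuous in $(f,g)$ in the $\mathcal{D}$-norm, while the right-hand side is continuous in the $L^{2/(1+2\alpha)}$-norm thanks to (i), via the same Hardy--Littlewood--Sobolev argument that underlies that embedding lemma. Density of step functions then yields the full statement, and the special case with $f=g\equiv 1$ recovers the covariance formula for the increments of $b$.

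For (iii), I would start with $f=\sum_{j}f_{j}\bm{1}_{[t_{j-1},t_{j})}\in\mathcal{E}(s,t;V)$, for which the identity is immediate from the definition of $i$ and the bilinearity of the inner product, giving
\begin{equation*}
\Bigl\langle h,\sum_{j}f_{j}(b_{t_{j}}-b_{t_{j-1}})\Bigr\rangle_{V}=\sum_{j}\langle h,f_{j}\rangle_{V}(b_{t_{j}}-b_{t_{j-1}})=\int_{s}^{t}\langle h,f(r)\rangle_{V}\d b_{r}.
\end{equation*}
To extend to $f\in L^{2/(1+2\alpha)}(s,t;V)$, I approximate $f$ by step functions $f_{n}$ in the $L^{2/(1+2\alpha)}$-norm; by the embedding lemma $i_{s,t}(f_{n})\to i_{s,t}(f)$ in $L^{2}(\Omega;V)$, so the left-hand side converges in $L^{2}(\Omega)$, and since $|\langle h,f\rangle_{V}|\le\|h\|_{V}\|f\|_{V}$ pointwise, $\langle h,f_{n}\rangle_{V}\to\langle h,f\rangle_{V}$ in $L^{2/(1+2\alpha)}(s,t;\R)$, so the right-hand side also converges in $L^{2}(\Omega)$ by the same embedding. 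The main (and essentially only) obstacle is the technical extension in (ii): one must make sure the iterated integral on the right makes sense absolutely and that the continuity in $(f,g)$ really does come from estimate (i); once this is done, (i) and (iii) are straightforward.
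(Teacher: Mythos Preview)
Your proof is correct and aligns with the paper's approach. The paper's own proof is terse---it merely says that (i) follows from the kernel estimate via the substitution $z=(v-r)/(u-r)$ (for $u<v$), and that (ii) and (iii) follow by ``standard approximation arguments''---so your proposal supplies precisely the details the paper leaves implicit. Your substitution $r=u-(v-u)t$ in (i) differs cosmetically from the paper's but yields the same Beta-type integral, and your handling of (ii) and (iii) via the step-function isometry and the $L^{2/(1+2\alpha)}\hookrightarrow\mathcal{D}$ embedding is exactly the standard approximation the paper has in mind.
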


\begin{proof}
The claim (i) follows by using \eqref{eq:kernel_estimate} and the substitution $z=\frac{v-r}{u-r}$ for $u<v$. Claims (ii) and (iii) follow by standard approximation arguments.
\end{proof}

The next proposition allows to work with the integral $\int_0^tf(r)\d{b}_r$ instead of the convolution integral $\int_0^tf(t-r)\d{b}_r$. Its multidimensional version is used in the proofs of \autoref{prop:limiting_measure} and \autoref{prop:strict_stationarity}.

\begin{proposition}
\label{lem:law_of_integrals}
Assume that $b$ has stationary and reflexive increments. Then for every function $f\in L^\frac{2}{1+2\alpha}_{\mathrm{loc}}([0,\infty);V)$ we have that 
	\begin{equation*}
	 \int_0^tf(t-r)\d b_r\overset{Law}{=} \int_0^tf(r)\d b_r \overset{Law}{=} \int_{-t}^0f(-u)\d{b}_u
	\end{equation*}
for every $t\geq 0$. 
\end{proposition}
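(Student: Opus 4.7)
The plan is to establish the distributional identities first for step functions, where each of the three integrals reduces to a finite linear combination of increments of $b$, and then extend to general $f$ by density together with the isometry \eqref{eq:ito_isometry}.

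First I would fix a step function $f = \sum_{j=1}^n f_j\,\bm{1}_{[t_{j-1},t_j)}$ with $0 = t_0 < t_1 < \cdots < t_n = t$ and $f_j \in V$, and write out
\begin{align*}
\int_0^t f(t-r)\d{b}_r &= \sum_{j=1}^n f_j\,(b_{t-t_{j-1}} - b_{t-t_j}), \\
\int_0^t f(r)\d{b}_r &= \sum_{j=1}^n f_j\,(b_{t_j} - b_{t_{j-1}}), \\
\int_{-t}^0 f(-u)\d{b}_u &= \sum_{j=1}^n f_j\,(b_{-t_{j-1}} - b_{-t_j}).
\end{align*}
Since the vectors $f_j$ are deterministic, each of these $V$-valued random variables is determined in law by the joint distribution of its corresponding $n$-tuple of scalar increments of $b$. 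The reflexivity of increments, applied to the intervals $[t_{j-1},t_j]$ for $j=1,\ldots,n$, identifies the laws of the second and third tuples, while the stationarity of increments, applied with shift $h=-t$ to the intervals $[t-t_j,t-t_{j-1}]$, identifies the laws of the first and third. Chaining these yields both distributional equalities at the level of step functions.

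Next I would pass to general $f \in L^\frac{2}{1+2\alpha}_{\mathrm{loc}}([0,\infty);V)$ by approximating $f|_{[0,t]}$ by step functions $f_n$ in $L^\frac{2}{1+2\alpha}([0,t];V)$. Since the reflections $r\mapsto f(t-r)$ and $u\mapsto f(-u)$ are isometries on $L^p$ for any $p\geq 1$, the transformed sequences $f_n(t-\cdot)$ and $f_n(-\cdot)$ converge to $f(t-\cdot)$ and $f(-\cdot)$ in the relevant $L^\frac{2}{1+2\alpha}$ norms. By the embedding $L^\frac{2}{1+2\alpha}\hookrightarrow\mathcal{D}$ from the preceding lemma and by the isometry \eqref{eq:ito_isometry}, each of the three stochastic integrals of the approximants converges in $L^2(\Omega;V)$, and hence in distribution, to the corresponding integral of $f$, so the step-function identities survive the limit.

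The only real obstacle is the bookkeeping in the step-function case: checking that $r\mapsto t-r$ reverses the partition of $[0,t]$ into the intervals $[t-t_j,t-t_{j-1}]$ and that $u\mapsto -u$ sends the same partition onto the intervals $[-t_j,-t_{j-1}]\subset[-t,0]$. Once these are made explicit, the strengthened form of stationarity from \autoref{def:stationarity_reflexivity}, which permits simultaneous shifts of entire tuples of possibly non-equidistant intervals, is exactly what is required, as already anticipated in the remark following that definition.
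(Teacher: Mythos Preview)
Your proposal is correct and follows essentially the same route as the paper: reduce to step functions, identify the laws of the increment tuples via stationarity (shift by $-t$) and reflexivity, then pass to the limit in $L^2(\Omega;V)$ using the $L^{\frac{2}{1+2\alpha}}\hookrightarrow\mathcal{D}$ embedding and the isometry. The paper only spells out the first equality and leaves the second to the reader, whereas you handle all three integrals simultaneously, but the argument is the same.
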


\begin{proof}
We shall only prove the first equality. The second follows by similar arguments. Assume that $f$ is a simple function of the form
	\begin{equation*}
		f(r) = \sum_{i=1}^{m}f_i\bm{1}_{[t_{i-1},t_{i})}, \quad r\in [0,t],
	\end{equation*}
for some $n\in\mathbb{N}$, some partition $\{0=t_0<\ldots< t_n=t\}$ and $f_i\in V$, $i=1,2,\ldots,n$. The stochastic integrals take the form
	\begin{align*}
		\vec{\imath}_t(f) &:=\int_0^tf(r)\d b_r = \sum_{i=1}^mf_i\left(b_{t_i}-b_{t_{i-1}}\right),\\
		\cev{\imath}_t(f) &:= \int_0^tf(t-r)\d b_{r}  = \sum_{i=1}^mf_i\left(b_{t-t_{i-1}}-b_{t-t_{i}}\right).
	\end{align*}
By stationarity and reflexivity of the increments, we have that 
	\begin{align*}
		Law(b_{t-t_0}-b_{t-t_1}, b_{t-t_1}-b_{t-t_2}, \ldots, b_{t-t_{m-1}}-b_{t-t_m}) & = \\
		& \hspace{-5cm} = Law(b_{-t_0}-b_{-t_1}, b_{-t_1}-b_{-t_2}, \ldots, b_{-t_{m-1}}-b_{-t_m})\\
		& \hspace{-5cm} = Law(b_{t_1}-b_{t_0}, b_{t_2}-b_{t_1},\ldots, b_{t_m}-b_{t_{m-1}}).
	\end{align*}
Therefore, the probability laws of $\vec{\imath}_t(f)$ and $\cev{\imath}_t(f)$ must be equal. Now, let $f\in L^\frac{2}{1+2\alpha}(0,t;V)$ and let $\{f^{(n)}\}$ be a sequence of step functions such that $f^{n}\rightarrow f$ as $n\rightarrow\infty$ in $L^\frac{2}{1+2\alpha}(0,t;V)$. Clearly, $\vec{\imath_t}(f^n)\rightarrow \vec{\imath_t}(f)$ and $\cev{\imath_t}(f^n)\rightarrow \cev{\imath_t}(f)$ in $L^2(\Omega;V)$. Then we have that $\mu_{\vec{i}_t(f^{(n)})} = \mu_{\cev{i}_t(f^{(n)})}$ for each $n\in\mathbb{N}$ and thus,
	\begin{equation*}
		\mu_{\,\vec{i}_t(f)} =  \limweak_{n\rightarrow\infty} \mu_{\,\vec{i}_t(f^{n})} = \limweak_{n\rightarrow\infty} \mu_{\,\cev{i}_t(f^{n})} = \mu_{\,\cev{i}_t(f)}
	\end{equation*}
where $\mu_Y$ denotes the probability law of the random variable $Y$.
\end{proof}

In order to consider stochastic evolution equations, a Volterra process with values in a Hilbert space must be introduced. 

\begin{definition}
\label{def:cylindrical_process}
Let $U$ be a real separable Hilbert space. \textit{$U$-cylindrical $\alpha$-regular Volterra process} is a collection $B=(B_t, t\in\R)$ of bounded linear operators $B_t:U\rightarrow L^2(\Omega)$ such that 
\begin{itemize}
\itemsep-0.5em
	\item for every $u\in U$, $B(u)$ is a centered stochastic process in $\R$ with $B_0(u)=0$;
	\item for every $s_1,t_1,s_2,t_2\in \R$ and every $u_1,u_2\in U$ it holds that
		\begin{equation}
		\label{eq:correlation_of_cylindrical}
			\E\left(B_{t_1}(u_1)-B_{s_1}(u_2)\right)\left(B_{t_2}(u_2)-B_{s_2}(u_2)\right)  = R(s_1,t_1,s_2,t_2)\langle u_1,u_2\rangle_V
		\end{equation}
with $R$ given by \eqref{eq:definition_of_R}.
\end{itemize}
\end{definition}

\begin{remark}
If $B$ is a $U$-cylindrical $\alpha$-regular Volterra process, then for every complete orthonormal basis $\{e_n\}$ of $U$ there is a sequence $\{b^{(n)}\}$ of uncorrelated scalar $\alpha$-regular Volterra processes such that for every $u\in U$, we have
	\begin{equation}
	\label{eq:representation_of_cylindrical}
		B_t(u) = \sum_{n}\langle u,e_n\rangle_Ub^{(n)}_t.
	\end{equation}
In fact, the sequence $\{b^{(n)}\}$ is given by $b^{(n)}=B(e_n)$. By uncorrelated, we mean that 
	\begin{equation*}
		\E(b_{t_1}^{(n)}-b_{s_1}^{(n)})(b_{t_2}^{(m)}-b_{s_2}^{(m)}) = 0, \quad m\neq n,
	\end{equation*}
for every $t_1,s_1,t_2,s_2\in \R$ which clearly holds by \eqref{eq:correlation_of_cylindrical}. Note that although each $b^{(n)}$ might be a different process, they have the same kernel (e.g. if $U={\R}^2$, then $b^{(1)}$ might be the fBm of $H>\frac{1}{2}$ and $b^{(2)}$ the Rosenblatt process of the same $H$). On the other hand, given an orthonormal basis $\{e_n\}$ of $U$ and a sequence of uncorrelated $\alpha$-regular Volterra processes $\{b^{(n)}\}$, the sum \eqref{eq:representation_of_cylindrical} defines a $U$-cylindrical $\alpha$-regular Volterra process.
\end{remark}

\begin{definition}
Let $U$ be a real separable Hilbert space and $B$ be a $U$-cylindrical $\alpha$-regular Volterra process. We say that $B$ has \textit{stationary (or reflexive) increments} if for every $n$ and every $u_1,u_2,\ldots, u_n\in U$, the process $\bm{b}=(B(u_1), B(u_2), \ldots, B(u_n))$ has stationary (or reflexive) increments in the sense of \autoref{def:stationarity_reflexivity}.
\end{definition}	

An important case of $U$-cylindrical Volterra processes are the Gaussian ones.

\begin{definition}
We say that a $U$-cylindrical $\alpha$-regular Volterra process $B$ is \textit{Gaussian} if for every $u_1, u_2\in U$ and every $s,t\in\R$, the random vector $(B_s(u_1),B_t(u_2))$ is jointly Gaussian.
\end{definition}

\begin{remark}
Note that if $B$ is a $U$-cylindrical $\alpha$-regular Volterra process which is Gaussian, then for every orthonormal basis $\{e_n\}$ of $U$, the sequence $\{b^{(n)}\}=\{B(e_n)\}$ consists of mutually independent processes.
\end{remark}

An integral of operator-valued functions with respect to a $U$-cylindrical $\alpha$-regular Volterra process is further defined. The following construction is similar to the one given in \cite[Section 3]{CouMas16} in the case of one-sided $U$-cylindrical $\alpha$-regular Volterra processes.

\begin{definition}
An operator $G\in U\rightarrow \mathcal{D}(\R;V)$ is called \textit{elementary}, if 
	\begin{equation*}
		Gu = \sum_{k=1}^Kg_k\langle u,e_k\rangle_U
	\end{equation*}
holds for every $u\in U$ where $\{g_k\}\subset\mathcal{D}(\R;V)$ and $\{e_n\}$ is a complete orthonormal basis of $U$.
\end{definition}

Let $G\in\L(U,\mathcal{D}(\R;V))$, $B$ be a cylindrical $\alpha$-regular Volterra process and $\{e_k\}$ a complete orthonormal basis of $U$. Let $I(G)$ be the (elementary) integral
	\begin{equation*}
		I(G):=\sum_{k=1}^K \int g_k\d{b}^{(k)}
	\end{equation*}
where $b^{(k)}=B(e_k)$. As usual, we have to extend the operator $I$ to a larger space of operators. Since $b^{(k)}$ are uncorrelated, we obtain
	\begin{equation*}
		\|I(G)\|_{L^2(\Omega;V)} ^2 = \E\left|\sum_{k=1}^K\int g_k\d{b}^{(k)}\right|_V^2 = \sum_{k=1}^K\E\left|\int g_k\d{b}^{(k)}\right|_V^2 = \sum_{k=1}^K\|Ge_k\|^2_{\mathcal{D}(\R;V)}.
	\end{equation*}
In other words, we have that 
	\begin{equation}
	\label{eq:Ito_isome_infinite}
		\|I(G)\|_{L^2(\Omega)} = \|G\|_{\L_2(U,\mathcal{D}(\R;V))}.
	\end{equation}
for elementary operators $G$. Using formula \eqref{eq:Ito_isome_infinite}, we may extend the operator $I$ to a linear isometry between the space $\L_2(U,\mathcal{D}(\R;V))$ and a closed linear subspace of $L^2(\Omega;V)$.

\begin{definition}
A bounded operator $G: U\rightarrow \mathcal{D}(\R;V)$ is called \textit{stochastically integrable} with respect to a $U$-cylindrical  $\alpha$-regular Volterra process $B$ if $G\in\mathcal{L}_2(U,\mathcal{D}(\R;V))$. In this case, $I(G)$ is called a \textit{stochastic integral} of $G$ with respect to $B$.
\end{definition}

Naturally, an operator $G\in\L_2(U,\mathcal{D}(\R;V))$ may be identified with a deterministic operator-valued map $G:\R\rightarrow\L_2(U,V)$ and we do so in the sequel. The following proposition will allow us to define the definite stochastic integral with respect to $B$.

\begin{proposition}
\label{prop:integrable_operator}
Let $G\in L^\frac{2}{1+2\alpha}_{\mathrm{loc}}(\R;\L_2(U,V))$ and let $-\infty<s<t<\infty$. Then the function $\bm{1}_{[s,t)}G$ is stochastically integrable with respect to a $U$-cylindrical $\alpha$-regular Volterra process $B$. 
\end{proposition}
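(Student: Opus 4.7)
The plan is to verify the condition $\bm{1}_{[s,t)}G \in \mathcal{L}_2(U,\mathcal{D}(\R;V))$ by estimating its Hilbert--Schmidt norm in terms of $\|G\|_{L^p(s,t;\mathcal{L}_2(U,V))}$, where $p := 2/(1+2\alpha)$; the latter is finite by the hypothesis that $G \in L^p_{\mathrm{loc}}(\R;\mathcal{L}_2(U,V))$ together with $-\infty<s<t<\infty$.

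First, I would fix a complete orthonormal basis $\{e_k\}$ of $U$ and expand
\[
\|\bm{1}_{[s,t)}G\|_{\mathcal{L}_2(U,\mathcal{D}(\R;V))}^{2} \;=\; \sum_{k}\|\bm{1}_{[s,t)}G(\cdot)e_k\|_{\mathcal{D}(\R;V)}^{2}.
\]
Apply the previously established continuous embedding $L^p(\R;V)\hookrightarrow \mathcal{D}(\R;V)$ term by term: there exists $C>0$, independent of $k$, with
\[
\|\bm{1}_{[s,t)}G(\cdot)e_k\|_{\mathcal{D}(\R;V)} \;\leq\; C\,\|G(\cdot)e_k\|_{L^p(s,t;V)}.
\]

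Next, I would invoke Minkowski's integral inequality to swap the order of summation and integration. Since $\alpha\geq 0$ we have $p=2/(1+2\alpha)\leq 2$, so the mixed-norm estimate $\|\cdot\|_{\ell^2_k L^p_r}\leq \|\cdot\|_{L^p_r \ell^2_k}$ applies and yields
\[
\left(\sum_k \|G(\cdot)e_k\|_{L^p(s,t;V)}^{2}\right)^{1/2} \;\leq\; \left(\int_s^t\Big(\sum_k \|G(r)e_k\|_V^2\Big)^{p/2}\mathrm{d}r\right)^{1/p} \;=\; \|G\|_{L^p(s,t;\mathcal{L}_2(U,V))}.
\]
Chaining the two estimates gives $\|\bm{1}_{[s,t)}G\|_{\mathcal{L}_2(U,\mathcal{D}(\R;V))} \leq C\|G\|_{L^p(s,t;\mathcal{L}_2(U,V))}<\infty$, which is the desired stochastic integrability.

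There is no real obstacle here: the argument is a straightforward combination of the embedding lemma for $\mathcal{D}(\R;V)$ and Minkowski's inequality. The only point one has to be slightly careful about is that $p\leq 2$ is needed for Minkowski to go in the correct direction, and this holds precisely because the regularity parameter satisfies $\alpha\in(0,\tfrac12)$; independence of the embedding constant $C$ from $k$ is automatic since the embedding is of scalar-valued norms applied component-wise.
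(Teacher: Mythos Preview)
Your argument is correct. The embedding lemma applied componentwise, followed by the mixed-norm Minkowski inequality (which indeed goes in the required direction because $p=\tfrac{2}{1+2\alpha}\leq 2$), yields the bound $\|\bm{1}_{[s,t)}G\|_{\mathcal{L}_2(U,\mathcal{D}(\R;V))}\lesssim \|G\|_{L^p(s,t;\mathcal{L}_2(U,V))}$.

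The paper takes a slightly different route. Rather than invoking the embedding on each coordinate $G(\cdot)e_k$ and then summing via Minkowski, it first computes the Hilbert--Schmidt norm exactly as a double integral,
\[
\|\bm{1}_{[s,t)}G\|_{\mathcal{L}_2(U,\mathcal{D}(\R;V))}^2 = \int_s^t\int_s^t \langle G(u),G(v)\rangle_{\mathcal{L}_2(U,V)}\,\phi(u,v)\,\mathrm{d}u\,\mathrm{d}v,
\]
using part (ii) of \autoref{lem:inner_prod}, and then bounds $\phi(u,v)\lesssim|u-v|^{2\alpha-1}$ and applies H\"older plus Hardy--Littlewood directly to the scalar function $\|G(\cdot)\|_{\mathcal{L}_2(U,V)}$. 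This avoids the Minkowski step altogether. The payoff is that the exact identity above is obtained along the way; it is recorded as formula \eqref{eq:norm_of_integral} and reused in the proof of \autoref{prop:limiting_measure}. Your approach is just as valid for establishing integrability, but it treats the embedding as a black box and does not produce this identity.
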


\begin{proof}
Using \eqref{eq:D_norm}, (ii) and (i) of \autoref{lem:inner_prod}, and the H\"{o}lder and Hardy-Littlewood inequalities successively, we obtain
	\begin{align*}
		\|\bm{1}_{[s,t)}G\|_{\L_2(U,\mathcal{D}(\R;V))}^2 & = \sum_{n}\left\|\mathcal{K}^*\bm{1}_{[s,t)}Ge_n\right\|_{L^2(\R;V)}^2\\
		& = \int_s^t\int_s^t \langle G(u),G(v)\rangle_{\L_2(U,V)}\phi(u,v)\d{u}\d{v}\\
		& \lesssim \int_s^t\int_s^t \|G(u)\|_{\L_2(U,V)}\|G(v)\|_{\L_2(U,V)}|u-v|^{2\alpha-1}\d{u}\d{v}\\
		& \lesssim \left(\int_s^t\|G(u)\|_{\L_2(U,V)}^\frac{2}{1+2\alpha}\d{u}\right)^{1+2\alpha}<\infty.
	\end{align*}
\end{proof}

It follows from \autoref{prop:integrable_operator} that for $G\in L^\frac{2}{1+2\alpha}_{\mathrm{loc}}(\R;\L_2(U,V))$ we may set 
\begin{equation*}
	I_{s,t}(G):=\int_s^tG(r)\d{B}_r := I(\bm{1}_{[s,t)}G)
\end{equation*}
and call it the \textit{(definite) stochastic integral} of $G$ with respect to $B$ on $[s,t)$. In this case, it holds that
	\begin{equation}
	\label{eq:norm_of_integral}
		\|I_{s,t}(G)\|_{L^2(\Omega;V)}^2 = \int_s^t\int_s^t\langle G(u),G(v)\rangle_{\L_2(U,V)}\phi(u,v)\d{u}\d{v}.
	\end{equation}
Moreover, linearity of $I$ and the identity $\bm{1}_{[s,t)}=\bm{1}_{[s,x)} +\bm{1}_{[x,t)}$ ensure that $I_{s,t}(G) = I_{s,x}(G)+I_{x,t}(G)$ for every $s<x<t$.

\section{Limiting measure and stationary solutions}
\label{sec:limiting_measure}

Let $U,V$ be two real separable Hilbert spaces and consider the stochastic evolution equation 
	\begin{equation}
	\label{eq:SDE}
		\left\{\begin{array}{rl}
			\d X_t & = AX_t +\varPhi\d B_t, \quad t\geq 0,\\
				X_0 & = x
		\end{array}\right.
	\end{equation}
where $A$ is an infinitesimal generator of a strongly continuous semigroup $(S(t), t\geq 0)$ of bounded linear operators acting on $V$ and $x\in L^2(\Omega;V)$. We assume that $\varPhi\in\mathcal{L}(U,V)$ and $B=(B_t, t\in\R)$ is a $U$-cylindrical $\alpha$-regular Volterra process. The solution to \eqref{eq:SDE} is given in the mild form by the variation of constants formula
	\begin{equation}
	\label{eq:mild_solution_Zt}
		X_t^x := S(t)x+Z_t:= S(t)x +\int_0^tS(t-r)\varPhi\d B_r \quad t\geq 0.
	\end{equation}
	
Consider the following:
	\begin{enumerate}
	\item[\namedlabel{hyp:H}{\textbf{(H)}}] Let $S(r)\varPhi\in\L_2(U,V)$ for all $r>0$. Let there further exist $T_0>0$ such that 
			\begin{equation}
			\label{eq:H_integral}
				\int_0^{T_0} \|S(r)\varPhi\|_{\L_2(U,V)}^\frac{2}{1+2\alpha}\d{r} <\infty.
			\end{equation}
	\end{enumerate}

\begin{proposition}
\label{prop:existence}
If \ref{hyp:H} holds, then then the mild solution $X^x$ given by \eqref{eq:mild_solution_Zt} is a well-defined $V$-valued process which is mean-square right continuous and, in particular, it has a version with measurable sample paths.
\end{proposition}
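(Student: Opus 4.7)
The plan is to dispatch three claims in sequence: well-definedness of $Z_t$ as an element of $L^2(\Omega;V)$ for each $t\geq 0$, mean-square right continuity of $t\mapsto X^x_t$, and existence of a modification with measurable sample paths. For the first, by \autoref{prop:integrable_operator} it is enough to verify that the map $r\mapsto \bm{1}_{[0,t)}(r) S(t-r)\varPhi$ belongs to $L^{\frac{2}{1+2\alpha}}(\R;\L_2(U,V))$ for every $t>0$. A substitution $u=t-r$ reduces this to $\int_0^t \|S(u)\varPhi\|_{\L_2(U,V)}^{\frac{2}{1+2\alpha}}\d u <\infty$, which is immediate from \eqref{eq:H_integral} when $t\leq T_0$. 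For $t>T_0$, split the integral at $T_0$ and on the tail use the semigroup identity $S(u)\varPhi = S(u-T_0)S(T_0)\varPhi$ together with $S(T_0)\varPhi\in\L_2(U,V)$ (guaranteed by \ref{hyp:H}) and local boundedness of $\|S(\cdot)\|_{\L(V)}$ to estimate the remainder.

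For the mean-square right continuity, fix $t\geq 0$ and, for $h>0$ small, decompose
\begin{equation*}
    X^x_{t+h} - X^x_t = (S(t+h)-S(t))x + \int_0^t [S(t+h-r)-S(t-r)]\varPhi\,\d B_r + \int_t^{t+h} S(t+h-r)\varPhi\,\d B_r.
\end{equation*}
The deterministic term goes to zero in $L^2(\Omega;V)$ by strong continuity of $S$ combined with Lebesgue's dominated convergence theorem, using the dominant $2M\|x\|_V$ on $h\in[0,1]$, where $M:=\sup_{s\in[0,t+1]}\|S(s)\|_{\L(V)}$. For each of the two stochastic terms, the $L^2(\Omega;V)$-norm squared can be bounded, exactly as in the proof of \autoref{prop:integrable_operator}, by chaining the isometry \eqref{eq:norm_of_integral}, claim (i) of \autoref{lem:inner_prod}, and the Hardy--Littlewood inequality, yielding an upper bound of the form $\bigl(\int_I \|G_h(r)\|_{\L_2(U,V)}^{\frac{2}{1+2\alpha}}\d r\bigr)^{1+2\alpha}$ over the appropriate interval. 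The boundary integral then reduces, after a substitution, to $\bigl(\int_0^h \|S(u)\varPhi\|^{\frac{2}{1+2\alpha}}_{\L_2(U,V)}\d u\bigr)^{1+2\alpha}\to 0$ by absolute continuity of the Lebesgue integral. For the principal stochastic term, write $S(t+h-r)\varPhi = S(h)S(t-r)\varPhi$ and pass to the limit under the $r$-integral by dominated convergence, using $(M+1)\|S(t-r)\varPhi\|^{\frac{2}{1+2\alpha}}_{\L_2(U,V)}$ as the dominant.

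Finally, mean-square right continuity implies stochastic continuity of $X^x$ on $[0,\infty)$, and a standard result then produces a jointly measurable modification, which in particular has measurable sample paths. The main technical obstacle is the pointwise-in-$r$ upgrade from strong operator convergence $S(h)\to I$ on $V$ to Hilbert--Schmidt norm convergence of $(S(h)-I)S(t-r)\varPhi$: strong convergence on $V$ does not automatically transfer to $\L_2$-norm convergence when composed with a fixed element of $\L_2(U,V)$, so one must carry out a separate dominated convergence argument on the defining series $\sum_n \|(S(h)-I)S(t-r)\varPhi e_n\|_V^2$ (dominated by $(M+1)^2\|S(t-r)\varPhi e_n\|_V^2$, which is summable), and only then feed the result into the outer dominated convergence step.
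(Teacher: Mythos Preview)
Your proof is correct and follows essentially the same route as the paper: the paper's own argument is only a sketch (pointing to \cite{CouMas16}), but it uses exactly your semigroup-splitting at $T_0$ for well-definedness and the same two-piece decomposition $Z_t-Z_s = \int_s^t S(t-r)\varPhi\,\d B_r + \int_0^s [S(t-s)-I]S(s-r)\varPhi\,\d B_r$ for right continuity, controlled via \autoref{lem:inner_prod}(i)--(ii) and the Hardy--Littlewood bound. Your explicit treatment of the initial-condition term and the inner dominated-convergence argument for $\|(S(h)-I)S(t-r)\varPhi\|_{\L_2(U,V)}\to 0$ fill in details the paper leaves implicit.
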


\begin{proof}
The proof essentially follows the lines of the proof of Proposition 4.2 in \cite{CouMas16} for the assumption \textbf{(A3)} where $f\equiv 0$. Parts (i) and (ii) of \autoref{lem:inner_prod} are the two properties needed. In particular, first it is shown that \eqref{eq:H_integral} combined with the fact that $S(u)=S(u-T_0)S(T_0)$ for $u>T_0$ imply that 
	\begin{equation*}
		\int_0^t\|S(r)\varPhi\|_{\L_2(U,V)}^\frac{2}{1+2\alpha}\d{r} <\infty
	\end{equation*}
for every $t>0$ which assures that $Z_t$ is a well-defined $V$-valued random variable for every $t>0$ by \autoref{prop:integrable_operator}. For $0<s<t$, using additivity of the definite stochastic integral, we can write
	\begin{equation*}
		\E|Z_t-Z_s|_V^2 \lesssim \E\left|\int_s^t S(t-r)\varPhi\d{B}_r\right|_V^2 + \E\left|\int_0^s[S(t-s)-I]S(s-r)\varPhi\d{B}_r\right|_V^2
	\end{equation*}
and \eqref{eq:H_integral} together with the strong continuity of the semigroup $(S(t),t\geq 0)$ imply that both terms above tend to zero as $t\searrow s$. 
\end{proof}

\begin{proposition}
\label{prop:covariance_operator}
Assume that \ref{hyp:H} holds and let $T>0$. Then $Z=(Z_t, t\in [0,T])$ is an $L^2(0,T;V)$-valued random variable whose covariance operator $Q_T: L^2(0,T;V)\rightarrow L^2(0,T;V)$ takes the form
		\begin{equation*}
			\left(Q_T\varphi\right)(r) = \int_0^Tg(r,s)\varphi(s)\d s,
		\end{equation*}
with 
	\begin{equation*}
		g(r,s) := \int_0^r\int_0^sS(r-u)\varPhi\varPhi^*S^*(s-v)\phi(u,v)\d u\d v
	\end{equation*}
and $\phi$ is defined in \autoref{lem:inner_prod}.
\end{proposition}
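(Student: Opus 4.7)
The plan is to first verify that $Z$ defines a well-defined $L^2(0,T;V)$-valued random variable with $\E\|Z\|_{L^2(0,T;V)}^2<\infty$, and then to identify the covariance operator by computing the bilinear form $\langle Q_T\varphi,\psi\rangle_{L^2(0,T;V)}=\E\,\langle Z,\varphi\rangle_{L^2(0,T;V)}\langle Z,\psi\rangle_{L^2(0,T;V)}$ for arbitrary test functions $\varphi,\psi\in L^2(0,T;V)$. Centeredness of $Z$ (so that the second-moment operator coincides with the covariance) follows immediately from centeredness of $B$.

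For the first step, I use the measurable version of $Z$ supplied by \autoref{prop:existence} and apply Fubini to write
\[
	\E\|Z\|_{L^2(0,T;V)}^2=\int_0^T\E|Z_t|_V^2\d t.
\]
The inner expectation is controlled via \eqref{eq:norm_of_integral} and part (i) of \autoref{lem:inner_prod}, followed by the H\"older and Hardy--Littlewood inequalities as in the proof of \autoref{prop:integrable_operator}, yielding $\E|Z_t|_V^2\lesssim\bigl(\int_0^t\|S(r)\varPhi\|_{\L_2(U,V)}^{2/(1+2\alpha)}\d r\bigr)^{1+2\alpha}$. Splitting the integral at $T_0$ and using $S(r)=S(r-T_0)S(T_0)$ together with \ref{hyp:H} and the uniform boundedness of $(S(t),t\in[0,T])$ turns this into a bound uniform in $t\in[0,T]$, hence integrable.

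The core of the proposition is the computation of $\E[\langle Z_r,\psi(r)\rangle_V\langle Z_s,\varphi(s)\rangle_V]$ for fixed $r,s\in[0,T]$. Writing $B_t(u)=\sum_n\langle u,e_n\rangle_U b^{(n)}_t$ and applying part (iii) of \autoref{lem:inner_prod} to each scalar component gives
\[
	\langle Z_r,\psi(r)\rangle_V=\sum_k\int_0^r\langle\varPhi^*S^*(r-u)\psi(r),e_k\rangle_U\d b^{(k)}_u,
\]
and the analogous expression for $\langle Z_s,\varphi(s)\rangle_V$. Since the scalar Volterra processes $\{b^{(k)}\}$ are uncorrelated, expanding the product expectation eliminates all cross terms, and part (ii) of \autoref{lem:inner_prod} evaluates each diagonal term as an integral with kernel $\phi(u,v)$. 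The sum over $k$ then collapses via Parseval to the $U$-inner product of $\varPhi^*S^*(r-u)\psi(r)$ and $\varPhi^*S^*(s-v)\varphi(s)$, equivalently $\langle\psi(r),S(r-u)\varPhi\varPhi^*S^*(s-v)\varphi(s)\rangle_V$. Integrating against $\phi(u,v)\d u\d v$ over $[0,r]\times[0,s]$ yields exactly $\langle\psi(r),g(r,s)\varphi(s)\rangle_V$.

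Finally, another application of Fubini (justified by the uniform bound obtained in the first step together with Cauchy--Schwarz) gives
\[
	\langle Q_T\varphi,\psi\rangle_{L^2(0,T;V)}=\int_0^T\!\!\int_0^T\langle\psi(r),g(r,s)\varphi(s)\rangle_V\d s\d r=\Bigl\langle\int_0^Tg(\cdot,s)\varphi(s)\d s,\psi\Bigr\rangle_{L^2(0,T;V)},
\]
which identifies $(Q_T\varphi)(r)=\int_0^T g(r,s)\varphi(s)\d s$. I expect the main technical hurdle to be the rigorous justification of interchanging the scalar inner product with the cylindrical stochastic integral: since \autoref{lem:inner_prod}(iii) is stated only for scalar Volterra processes, one has to pass through the elementary-operator approximation used to define $I(G)$ and then invoke the isometry \eqref{eq:Ito_isome_infinite} to upgrade the identity to the full space $\L_2(U,\mathcal{D}(\R;V))$; once this and the two Fubini steps are in place, the remainder of the argument reduces to the computations sketched above.
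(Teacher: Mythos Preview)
Your proposal is correct and follows essentially the same route as the paper's own proof: both establish square-integrability of $Z$ via the bound $\E|Z_t|_V^2\lesssim\bigl(\int_0^t\|S(r)\varPhi\|_{\L_2(U,V)}^{2/(1+2\alpha)}\d r\bigr)^{1+2\alpha}$, then compute $\E\langle Z_s,\varphi(s)\rangle_V\langle Z_r,\psi(r)\rangle_V$ by expanding $Z$ along an orthonormal basis $\{e_n\}$, pulling the inner product inside each scalar integral via \autoref{lem:inner_prod}(iii), using uncorrelatedness of the $b^{(n)}$ to drop cross terms, applying \autoref{lem:inner_prod}(ii), and collapsing the sum by Parseval. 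The paper justifies the sum/integral interchange by a one-line reference to \ref{hyp:H}, which is exactly the technical point you flag at the end.
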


\begin{proof}
Note first that $Z$ is an $L^2(0,T;V)$-valued random variable since by the proof of \autoref{prop:integrable_operator} we have that 
	\begin{equation*}
		\sup_{t\in [0,T]}\|Z_t\|_{L^2(\Omega;V)}\lesssim \sup_{t\in [0,T]}\left(\int_0^t\|S(r)\varPhi\|_{\L_2(U,V)}^\frac{2}{1+2\alpha}\d{r}\right)^{\alpha+\frac{1}{2}}
	\end{equation*}
which is finite by \eqref{eq:H_integral} similarly as in the proof of \autoref{prop:existence}. Let $\varphi, \psi\in L^2(0,T;V)$. Then
			\begin{align*}
				\langle Q_T\varphi,\psi\rangle_{L^2(0,T;V)} & = \E\int_0^T\langle\varphi(s),Z_s\rangle_V\d s\int_0^T\langle\psi(r),Z_r\rangle_V\d r\\
					 & = \int_0^T\int_0^T \E\langle\varphi(s),Z_s\rangle_V\langle\psi(r),Z_r\rangle_V\d r\d s.
			\end{align*}
Now, using the fact that $b^{(k)}$ and $b^{(l)}$ are uncorrelated and (iii) of \autoref{lem:inner_prod}, we obtain
			\begin{align*}
				\hspace{1cm} \E\langle\varphi(s),Z_s\rangle_V\langle\psi(r),Z_r\rangle_V & =\\
					& \hspace{-3cm} = \sum_{n,m}\E\left\langle\varphi(s), \int_0^sS(s-v)\varPhi e_n\d b_v^{(n)}\right\rangle_V\left\langle\psi(r),\int_0^rS(r-u)\varPhi e_m\d b_u^{(m)}\right\rangle_V\\
					& \hspace{-3cm} = \sum_{n} \E\int_0^s\langle\varphi(s),S(s-u)\varPhi e_n\rangle_V\d b_v^{(n)}\cdot\int_0^r\langle\psi(r),S(r-u)\varPhi e_n\rangle_V\d b_u^{(n)}.
			\end{align*}
Using (ii) of \autoref{lem:inner_prod}, we obtain
	\begin{align*}
				\hspace{1cm} \E\langle\varphi(s),Z_s\rangle_V\langle\psi(r),Z_r\rangle_V & =\\
					& \hspace{-3cm} = \sum_{n} \int_0^s\int_0^r\langle \varphi(s), S(s-v)\varPhi e_n\rangle_V\langle \psi(r), S(r-u)\varPhi e_n\rangle_V \phi(u,v)\d u\d v\\
					& \hspace{-3cm} = \sum_{n} \int_0^s\int_0^r\langle\varPhi^*S^*(s-v)\varphi(s),e_n\rangle_V\langle\varPhi^*S^*(r-u)\psi(r),e_n\rangle_V \phi(u,v)\d u\d v\\
					& \hspace{-3cm} =  \int_0^s\int_0^r\langle\varPhi^*S^*(s-v)\varphi(s), \varPhi^*S^*(r-u)\psi(r)\rangle_V\phi(u,v)\d u\d v\\
					& \hspace{-3cm} = \int_0^s\int_0^r \langle S(r-u)\varPhi\varPhi^*S^*(s-v)\varphi (s),\psi(r)\rangle_V\phi(u,v)\d u\d v.
			\end{align*}
The interchange of the sum and integrals is possible due to \eqref{eq:H_integral}. Hence
	\begin{align*}
		\hspace{1cm} \langle Q_T\varphi,\psi\rangle_{L^2(0,T;V)} & = \\
			& \hspace{-2cm} = \int_0^T\left\langle\int_0^T\left(\int_0^r\int_0^sS(r-u)\varPhi\varPhi^*S^*(s-v)\phi(u,v)\d u\d v\right)\varphi(s)\d s, \psi (r)\right\rangle_V\d r.
	\end{align*}
\end{proof}

We denote by $\mu_t^x$ the probability law $X_t^x$ in the rest of the paper. Clearly, the $V$-valued random variable $Z_t$ has the covariance operator 
	\begin{equation*}
		q_t = \int_0^t\int_0^tS(t-u)\varPhi\varPhi^*S^*(t-v)\phi(u,v)\d u\d v.
	\end{equation*}
and by \ref{hyp:H}, we have $\mathrm{Tr}\,q_t<\infty$. The following proposition will be useful in \autoref{ex:non_exp_stable}.

\begin{proposition}
\label{prop:necessary_condition}
Assume that \ref{hyp:H} holds and let $B$ be $U$-cylindrical $\alpha$-regular Gaussian Volterra process. If there is a measure $\mu_\infty$ such that $\limweak_{t\rightarrow\infty}\mu_t^0 = \mu_\infty$, then \[\sup_{t\geq 0}\mathrm{Tr}\,q_t<\infty.\] 
\end{proposition}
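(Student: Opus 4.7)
The plan is to reduce the claim to a uniform second-moment bound for a tight family of centred Gaussian measures on $V$, which will in turn follow from a uniform version of Fernique's theorem.

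First, I would observe that since $B$ is a Gaussian cylindrical process, the stochastic integral $Z_t=\int_0^t S(t-r)\varPhi\,\d B_r$, being the $L^2(\Omega;V)$-limit of Gaussian random variables of the form $\sum_k \int_0^t g_k(r)\,\d b^{(k)}_r$ with $b^{(k)}=B(e_k)$ mutually independent Gaussian Volterra processes, is itself a $V$-valued centred Gaussian random variable. Hence $\mu_t^0=N(0,q_t)$ is a centred Gaussian Radon measure on $V$ with (automatically nuclear) covariance $q_t$, and
\begin{equation*}
  \Tr q_t \;=\; \E|Z_t|_V^2 \;=\; \int_V |x|_V^2\,\d\mu_t^0(x).
\end{equation*}

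Next, the assumption $\limweak_{t\to\infty}\mu_t^0=\mu_\infty$ implies that the family $\{\mu_t^0:t\geq 0\}$ is weakly relatively compact in the space of Borel probability measures on $V$, and hence tight by Prokhorov's theorem. Tightness yields in particular the uniform tail estimate
\begin{equation*}
  \sup_{t\geq 0}\,\mu_t^0\bigl(\{x\in V:|x|_V>R_0\}\bigr)\;\leq\;\varepsilon_0
\end{equation*}
for some $R_0>0$ and some $\varepsilon_0\in(0,1/2)$ of our choice.

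At this point I would invoke the uniform Fernique estimate for tight families of centred Gaussian measures on a separable Hilbert space (a straightforward adaptation of Fernique's original argument, since all the constants produced by the standard iterative tail bound depend on the Gaussian only through the data $R_0$ and $\varepsilon_0$ above; see e.g.\ Da Prato--Zabczyk). This yields $\lambda>0$ with
\begin{equation*}
  \sup_{t\geq 0}\int_V e^{\lambda|x|_V^2}\,\d\mu_t^0(x)\;<\;\infty,
\end{equation*}
from which $\sup_{t\geq 0}\int_V|x|_V^2\,\d\mu_t^0=\sup_{t\geq 0}\Tr q_t<\infty$ is immediate.

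The only nontrivial step is the last one: the purely pointwise information coming from weak convergence and from $\hat{\mu}_t^0(h)=\exp(-\tfrac12\langle q_t h,h\rangle_V)\to \hat{\mu}_\infty(h)$ only gives $\sup_t\langle q_t h,h\rangle_V<\infty$ for each $h$ separately, i.e.\ uniform boundedness in the operator norm by the Banach--Steinhaus theorem, but not in the trace norm. Upgrading this to a uniform trace bound is exactly what the uniform Fernique argument achieves, and this is where the Gaussianity hypothesis is essential.
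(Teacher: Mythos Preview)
Your approach is correct and genuinely different from the paper's. The paper argues via characteristic functionals and the Bochner--Minlos--Sazonov theorem: since $\hat{\mu}_t^0(h)=\exp(-\tfrac12\langle q_th,h\rangle_V)\to\hat{\mu}_\infty(h)$, the limit is real-valued, and Sazonov continuity of $\hat{\mu}_\infty$ at the origin produces a positive trace-class operator $O$ such that $\langle Oh,h\rangle_V<1$ forces $\hat{\mu}_\infty(h)>1/2$, hence $\hat{\mu}_t^0(h)>1/2$ for large $t$, hence $\langle q_th,h\rangle_V<2\log 2$. A scaling argument then yields $q_t\leq (2\log 2)\,O$ in the operator order for all large $t$, so $\Tr q_t\leq(2\log 2)\Tr O$. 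Your route through tightness and a uniform Fernique estimate is more probabilistic and in fact gives more (uniform exponential integrability of $|x|_V^2$); the paper's route is shorter and exhibits an explicit dominating trace-class operator.

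One step in your argument deserves care. Weak convergence of $\mu_t^0$ as $t\to\infty$ does \emph{not} by itself imply relative compactness of the whole continuum $\{\mu_t^0:t\geq 0\}$; one can build counterexamples where the family misbehaves on a bounded $t$-interval while still converging at infinity. The clean fix is to argue by contradiction along a sequence: if $\sup_{t\geq 0}\Tr q_t=\infty$, choose $t_n$ with $\Tr q_{t_n}\to\infty$. If $(t_n)$ has a bounded subsequence, you contradict the local bound $\Tr q_t=\|Z_t\|_{L^2(\Omega;V)}^2\lesssim\bigl(\int_0^t\|S(r)\varPhi\|_{\L_2(U,V)}^{2/(1+2\alpha)}\d r\bigr)^{1+2\alpha}$, which is finite on compacts under \ref{hyp:H}. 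If $t_n\to\infty$, then $(\mu_{t_n}^0)$ converges weakly, hence is relatively compact in the Polish space of Borel probabilities on $V$, hence tight by Prokhorov, and your uniform Fernique bound applied to this \emph{sequence} gives $\sup_n\Tr q_{t_n}<\infty$, again a contradiction. (The paper's proof has the same implicit reliance on local boundedness: it only controls $\Tr q_t$ ``for sufficiently large $t$''.)
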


\begin{proof}
The proof is similar to the proof of \cite[Theorem 6.2.1, (i) $\Rightarrow$ (ii)]{DaPratoZabczykErgodicity}. If $B$ is Gaussian, then $\mu_t^0 = Law(Z_t) = N(0,q_t)$. Assume that there is a measure $\mu_\infty$ such that $\limweak_{t\rightarrow\infty}\mu_t^0=\mu_\infty$. Since the characteristic functional of $\mu_t^0$, $\varphi_{\mu_t^0}$, is given by 
	\begin{equation*}
		\varphi_{\mu_t^0}(h) = \exp(-1/2\langle q_th,h\rangle_V),
	\end{equation*}
we see that the characteristic functional of $\mu_{\infty}$, $\varphi_{\mu_\infty}$, must be real-valued because we have that $\varphi_{\mu_t^0}(h)\rightarrow \varphi_{\mu_\infty}(h)$ as $t\rightarrow\infty$ for all $h\in V$. According to Bochner-Minlos-Sazanov theorem (see e.g. \cite[Theorem 2.13]{DaPratoZab92} or \cite[Theorem 1.1.5]{ManRha04}), there exists a positive symmetric trace-class operator $O_{1/2}: V\rightarrow V$ such that for every $h\in V$ for which $\langle O_{1/2}h,h\rangle_V< 1$, we have that $\varphi_{\mu_\infty}$ satisfies $\varphi_{\mu_\infty}(h)> 1/2$. It follows from the convergence of $\varphi_{\mu_t^0}(h)$ to $\varphi_{\mu_\infty}(h)$ that for $h\in V$ such that $\langle O_{1/2}h,h\rangle_V<1$ we have that $\varphi_{\mu_t^0}(h)>1/2$ for sufficiently large $t$. Since we have that $\langle q_t h,h\rangle_V = 2\log\frac{1}{\varphi_{\mu_t^0}(h)}$, we obtain the implication
	\begin{equation*}
		h\in V: \langle O_{1/2}h,h\rangle_V<1 \quad \implies\quad \langle q_th,h\rangle_V < 2\log 2
	\end{equation*}
for sufficiently large $t$. It follows that $0\leq q_t < 2\log 2O_{1/2}$ and $0\leq \mathrm{Tr}\,q_t \leq 2\log 2\, \mathrm{Tr}\,O_{1/2}$ for sufficiently large $t$ which yields the claim.
\end{proof}

We now discuss the existence of a limiting measure of the process $Z$. We obtain an analogue of Proposition 3.4 from \cite{DunMasDun02}. Note however, that exponential stability of the semigroup is not assumed here.
	
\begin{proposition}
\label{prop:limiting_measure}
Assume that $B$ has stationary and reflexive increments. Assume further that $S(u)\varPhi\in\L_2(U,V)$ for every $u>0$ and that 
	\begin{equation}
	\label{eq:condition_on_inv_measure}
		\int_0^\infty \|S(r)\varPhi\|_{\L_2(U,V)}^\frac{2}{1+2\alpha}\d{r}<\infty
	\end{equation}
holds. Then there is a measure $\mu_\infty$ such that 
	\begin{equation*}
		\limweak_{t\rightarrow\infty}\mu_t^0 = \mu_\infty.
	\end{equation*}
\end{proposition}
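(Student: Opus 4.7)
The plan is to exhibit $\mu_\infty$ as the law of a backward-in-time stochastic integral. The key observation, obtained from the multidimensional version of \autoref{lem:law_of_integrals} applied to the $\L_2(U,V)$-valued map $r\mapsto S(r)\varPhi$, is that
	\begin{equation*}
		Z_t = \int_0^t S(t-r)\varPhi\,\d B_r \overset{Law}{=} \int_{-t}^0 S(-u)\varPhi\,\d B_u =: \tilde Z_t.
	\end{equation*}
It therefore suffices to prove that $\tilde Z_t$ converges in distribution as $t\to\infty$, and this I propose to do by showing convergence in $L^2(\Omega;V)$, which then also identifies the limiting measure $\mu_\infty$ as the law of the $L^2$-limit.

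First, each $\tilde Z_t$ is a well-defined element of $L^2(\Omega;V)$: \autoref{prop:integrable_operator} applied on $(-t,0)$, combined with the substitution $r=-u$, yields
	\begin{equation*}
		\|\tilde Z_t\|_{L^2(\Omega;V)}^2 \lesssim \left(\int_0^t \|S(r)\varPhi\|_{\L_2(U,V)}^{\frac{2}{1+2\alpha}}\,\d r\right)^{1+2\alpha},
	\end{equation*}
which is bounded uniformly in $t$ by \eqref{eq:condition_on_inv_measure}. Second, for $0<s<t$, additivity of $I$ together with the same estimate on the interval $(-t,-s)$ give
	\begin{equation*}
		\|\tilde Z_t - \tilde Z_s\|_{L^2(\Omega;V)}^2 = \|I_{-t,-s}(S(-\cdot)\varPhi)\|_{L^2(\Omega;V)}^2 \lesssim \left(\int_s^t \|S(r)\varPhi\|_{\L_2(U,V)}^{\frac{2}{1+2\alpha}}\,\d r\right)^{1+2\alpha},
	\end{equation*}
which tends to zero as $s,t\to\infty$, again by \eqref{eq:condition_on_inv_measure}. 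Hence $\{\tilde Z_t\}$ is Cauchy in $L^2(\Omega;V)$ and has an $L^2$-limit $Z_\infty$. Defining $\mu_\infty$ to be the law of $Z_\infty$, the $L^2$-convergence implies that the law of $\tilde Z_t$ converges weakly to $\mu_\infty$, and the identity in law displayed above then gives $\mu_t^0 = Law(Z_t)=Law(\tilde Z_t)\to\mu_\infty$, as required.

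The one nontrivial point is justifying the operator-valued, cylindrical version of \autoref{lem:law_of_integrals} needed for the first display. To establish $Z_t\overset{Law}{=}\tilde Z_t$ in $V$, I would test against finitely many $h_1,\dots,h_N\in V$ and use \autoref{lem:inner_prod}(iii) to write $\langle Z_t,h_i\rangle_V=\sum_n\int_0^t\langle S(t-r)\varPhi e_n,h_i\rangle_V\,\d b_r^{(n)}$, reducing the claim to the joint distribution of scalar Volterra integrals against a truncated family $(b^{(1)},\dots,b^{(K)})$. The hypothesis that $B$ has stationary and reflexive increments as a cylindrical process is precisely that each such finite vector inherits these properties in the sense of \autoref{def:stationarity_reflexivity}; the step-function approximation argument from the proof of \autoref{lem:law_of_integrals} then applies coordinate-wise, with a common refining partition, letting $K\to\infty$ at the end using \eqref{eq:H_integral} to control the truncation error in $L^2(\Omega;V)$.
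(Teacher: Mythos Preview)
Your proposal is correct and follows essentially the same strategy as the paper: replace $Z_t$ by a process with the same law for each $t$, then show that this auxiliary process converges in $L^2(\Omega;V)$ via a Cauchy argument based on the Hardy--Littlewood estimate and \eqref{eq:condition_on_inv_measure}. The only cosmetic difference is that the paper uses the forward integral $Z'_t:=\int_0^t S(u)\varPhi\,\d B_u$ (the first equality in \autoref{lem:law_of_integrals}), whereas you use the backward integral $\tilde Z_t=\int_{-t}^0 S(-u)\varPhi\,\d B_u$ (the second equality); your choice is exactly the process $Z''_t$ the paper employs later in the proof of \autoref{prop:strict_stationarity}, so the two arguments are interchangeable.
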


\begin{proof} 
Define
	\begin{equation*}
		Z'_t:=\int_0^tS(u)\varPhi\d B_r, \quad t\geq 0.
	\end{equation*}
A similar approximation procedure as in the proof of Lemma \ref{lem:law_of_integrals} applies and hence, we have
	\begin{equation*}
			Law(Z'_t)=Law(Z_t)=\mu_t^0, \quad t\geq 0.
	\end{equation*}	
We claim that there is $Z'_\infty\in L^2(\Omega;V)$ such that $Z'_t\rightarrow Z'_\infty$ in $L^2(\Omega;V)$ as $t\rightarrow\infty$. Indeed, let $\{k_n\}\subset [0,\infty)$ such that $k_n\rightarrow\infty$ as $n\rightarrow\infty$. We will show that $\{Z'_{k_n}\}_{n\in\mathbb{N}}$ is Cauchy in $L^2(\Omega;V)$. Let $n,m\in\mathbb{N}, n>m$. Using \eqref{eq:norm_of_integral}, we have that
		\begin{align*}
			\E|Z'_{k_n}-Z'_{k_m}|_V^2 & = \E\left|\int_{k_m}^{k_n}S(u)\varPhi\d B_r\right|^2_V\\
				& = \int_{k_m}^{k_n}\int_{k_m}^{k_n} \langle S(u)\varPhi,S(v)\varPhi\rangle_{\L_2(U,V)}\phi(u,v)\d u\d v\\
				& \lesssim \left(\int_{k_m}^{k_n}\|S(u)\varPhi\|_{\L_2(U,V)}^\frac{2}{1+2\alpha}\d{r}\right)^{1+2\alpha}\\
				& \leq \left(\int_{k_m}^\infty\|S(r)\varPhi\|_{\L_2(U,V)}^\frac{2}{1+2\alpha}\d{r}\right)^{1+2\alpha}
		\end{align*}
Now, if we let $m\rightarrow\infty$, the last integral tends to zero by \eqref{eq:condition_on_inv_measure}. Hence, $\{Z'_{k_n}\}_{n\in\mathbb{N}}$ is Cauchy in $L^2(\Omega;V)$ and there must be a limit $Z_\infty^k$. Let $\{l_n\}_{n\in\mathbb{N}}\subset[0,\infty)$ be another sequence such that $l_n\rightarrow\infty$ as $n\rightarrow\infty$ and let $Z_\infty^l$ be the corresponding limit constructed as above. We then have
	\begin{equation*}
		\|Z_\infty^k-Z_\infty^l\|_{L^2(\Omega;V)} \leq \|Z_\infty^k-Z_{k_n}'\|_{L^2(\Omega;V)} + \|Z_{k_n}'-Z_{l_n}'\|_{L^2(\Omega;V)} + \|Z_{l_n}'-Z_\infty^l\|_{L^2(\Omega;V)}
	\end{equation*}
and similarly as before, it can be shown that the middle term tends to zero as $n\rightarrow \infty$ and hence, $Z_\infty^k=Z_\infty^l=:Z'_\infty$.
\end{proof}

If the semigroup $(S(t), t\geq 0)$ is strongly stable, then $\mu_\infty$ is a limiting measure for the solution $X^x$ for every initial condition $x\in L^2(\Omega;V)$.

\begin{proposition}
\label{prop:limiting_measure_2}
Let the assumptions of \autoref{prop:limiting_measure} hold. Let $x\in L^2(\Omega;V)$ be such that  
	\begin{equation}
	\label{eq:stability_of_initial_condition}
			\lim_{t\rightarrow\infty}|S(t)x|_V=0
	\end{equation}
almost surely. Then 
	\begin{equation}
	\label{eq:weak_limit}
		\limweak_{t\rightarrow\infty}\mu_t^x = \mu_\infty.
	\end{equation}
In particular, if the semigroup $(S(t),t\geq 0)$ is strongly stable (i.e. $S(t)\rightarrow 0$ as $t\rightarrow \infty$ in the strong operator topology), then \eqref{eq:weak_limit} holds for every $x\in L^2(\Omega;V)$. 
\end{proposition}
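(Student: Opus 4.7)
The plan is to decompose the mild solution as $X_t^x = S(t)x + Z_t$ and treat the two summands separately, combining the weak convergence of $\mathrm{Law}(Z_t)$ to $\mu_\infty$ supplied by \autoref{prop:limiting_measure} with the almost sure decay \eqref{eq:stability_of_initial_condition} of the deterministic term. By the Portmanteau theorem it suffices to verify that $\E\varphi(X_t^x)\to\int_V\varphi\,\d\mu_\infty$ as $t\to\infty$ for every bounded Lipschitz function $\varphi:V\to\R$, so I would fix such a $\varphi$ with Lipschitz constant $L$ and supremum norm $\|\varphi\|_\infty$, and split
\begin{equation*}
\E\varphi(X_t^x) - \int_V\varphi\,\d\mu_\infty = \bigl(\E\varphi(S(t)x+Z_t) - \E\varphi(Z_t)\bigr) + \bigl(\E\varphi(Z_t) - \int_V\varphi\,\d\mu_\infty\bigr).
\end{equation*}

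For the first bracket, Lipschitz continuity of $\varphi$ together with the trivial bound $|\varphi(y_1)-\varphi(y_2)|\leq 2\|\varphi\|_\infty$ gives the pointwise estimate
\begin{equation*}
|\varphi(S(t)x+Z_t)-\varphi(Z_t)|\leq \min\{L|S(t)x|_V,\,2\|\varphi\|_\infty\},
\end{equation*}
whose right-hand side is dominated by the integrable constant $2\|\varphi\|_\infty$ and converges to zero almost surely by \eqref{eq:stability_of_initial_condition}; dominated convergence thus drives the first bracket to zero. For the second bracket I would invoke the construction from the proof of \autoref{prop:limiting_measure}: setting $Z_t' := \int_0^t S(u)\varPhi\,\d B_u$, one has $\mathrm{Law}(Z_t) = \mathrm{Law}(Z_t')$ for every $t\geq 0$ and $Z_t'\to Z_\infty'$ in $L^2(\Omega;V)$, where by definition $\mathrm{Law}(Z_\infty') = \mu_\infty$. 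Since $L^2$-convergence implies convergence in probability, continuity and boundedness of $\varphi$ combined with dominated convergence yield $\E\varphi(Z_t) = \E\varphi(Z_t')\to \E\varphi(Z_\infty') = \int_V\varphi\,\d\mu_\infty$, so the second bracket also vanishes in the limit and \eqref{eq:weak_limit} follows.

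The "in particular" assertion is then immediate: strong stability of $(S(t), t\geq 0)$ means $S(t)y\to 0$ in $V$ for every $y\in V$, so applied $\omega$-wise to $y=x(\omega)\in V$ it yields \eqref{eq:stability_of_initial_condition} for almost every $\omega$. The main subtlety is that one cannot simply apply a joint weak-limit argument to the pair $(S(t)x, Z_t)$, because $Z_t$ itself need not converge in probability, only in distribution; the Slutsky-type reduction above, which tests against bounded Lipschitz functions and applies dominated convergence separately to each bracket, is precisely what circumvents this obstacle and should constitute the main technical content of the argument.
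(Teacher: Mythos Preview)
Your proof is correct and follows the same overall strategy as the paper: test against bounded Lipschitz functions and exploit the $L^2(\Omega;V)$-convergence $Z'_t\to Z'_\infty$ established in the proof of \autoref{prop:limiting_measure}. The paper's argument is more compressed: it writes $\int_V g\,\d\mu_t^x = \E g(S(t)x+Z'_t)$ in one step and then applies the Lipschitz estimate directly to obtain the bound $C_{\mathrm{Lip}}\bigl(\E|S(t)x|_V+\E|Z'_t-Z'_\infty|_V\bigr)$. Your two-bracket decomposition is in fact tidier on two technical points. First, you only ever invoke the \emph{marginal} identity $Law(Z_t)=Law(Z'_t)$, whereas the paper's first equality tacitly needs $Law(S(t)x+Z_t)=Law(S(t)x+Z'_t)$, which is not automatic for a general (possibly $B$-dependent) initial condition $x\in L^2(\Omega;V)$. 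Second, by bounding $|\varphi(S(t)x+Z_t)-\varphi(Z_t)|$ by $\min\{L|S(t)x|_V,\,2\|\varphi\|_\infty\}$ you pass from the almost-sure hypothesis \eqref{eq:stability_of_initial_condition} to convergence of expectations via dominated convergence with the integrable majorant $2\|\varphi\|_\infty$; the paper's bound $C_{\mathrm{Lip}}\,\E|S(t)x|_V$ instead requires $L^1$-convergence of $|S(t)x|_V$, which does not follow from \eqref{eq:stability_of_initial_condition} alone without an additional domination argument.
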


\begin{proof}
Let $\mu_\infty$ be the law of $Z_{\infty}'$ from the proof of \autoref{prop:limiting_measure} and let $g:V\rightarrow\mathbb{R}$ be a bounded Lipschitz continuous functional. Then we have 
	\begin{align*}
		\left|\int_Vg\d\mu_t^x-\int_Vg\d\mu_\infty\right| &=\left|\E g\left(S(t)x+Z'_t\right) - \E g\left(Z'_\infty\right)\right|_V\\
		& \leq \E\left|g\left(S(t)x+Z'_t\right)-g\left(Z'_\infty\right)\right|_V\\
		& \leq C_{\mathrm{Lip}}\left(\E|S(t)x|_V+\E\left|Z'_t-Z'_\infty\right|_V\right)
	\end{align*}
which tends to zero as $t\rightarrow\infty$ by the proof of \autoref{prop:limiting_measure}.
\end{proof}

It is well-known that if the equation \eqref{eq:SDE} is driven by the cylindrical Wiener process, the measure $\mu_\infty$ is invariant. As noted in \cite{DunMasDun02}, this fails to be true when the driving process is the cylindrical fBm $B^H$. Indeed, if $Law(x)=\mu_\infty$ and if $x$ is independent of the process $B^H$, then $\mu_t^x$ might not remain constant. However, as the next proposition, this may be achieved for one particular initial condition. The proposition is an analogous to \cite[Theorem 3.1]{MasPos08}.

\begin{proposition}
\label{prop:strict_stationarity}
Let the assumptions of \autoref{prop:limiting_measure} hold. Then there is a $V$-valued random variable $x_\infty$ such that $(X^{x_\infty}_t, t\geq 0)$ is a strictly stationary process such that $Law(X^{x_\infty}_t) = \mu_\infty$ for all $t\geq 0$.
\end{proposition}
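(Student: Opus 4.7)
The plan is to build a stationary initial condition by the ``backward'' stochastic integral
\[ x_\infty := \int_{-\infty}^0 S(-r)\varPhi\d B_r \]
and to show that the corresponding solution admits the representation $X_t^{x_\infty}=\int_{-\infty}^t S(t-r)\varPhi\d B_r$. With this representation, a time shift $t\mapsto t+h$ translates into shifting the integrator $B$ by $h$, and stationarity will follow from the stationary increments of $B$.

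\emph{Step 1: well-definedness of $x_\infty$.} Put $x_n := \int_{-n}^0 S(-r)\varPhi\d B_r$, which is well defined by \autoref{prop:integrable_operator} after the substitution $r\mapsto -r$. Applying the isometry \eqref{eq:norm_of_integral}, part (i) of \autoref{lem:inner_prod}, the Hardy--Littlewood inequality, and once more the substitution $u\mapsto -u$, $v\mapsto -v$, one obtains for $m<n$
\[ \|x_n-x_m\|_{L^2(\Omega;V)}^2 \lesssim \left(\int_m^n \|S(r)\varPhi\|_{\L_2(U,V)}^{\frac{2}{1+2\alpha}}\d r\right)^{1+2\alpha}, \]
exactly as in the proof of \autoref{prop:limiting_measure}. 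By \eqref{eq:condition_on_inv_measure} the right-hand side tends to zero as $m\to\infty$, so $(x_n)$ is Cauchy in $L^2(\Omega;V)$ and its limit $x_\infty$ is well defined.

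\emph{Step 2: $Law(x_\infty)=\mu_\infty$.} By the second equality in \autoref{lem:law_of_integrals}, $Z_t':=\int_0^t S(u)\varPhi\d B_u \overset{Law}{=}\int_{-t}^0 S(-u)\varPhi\d B_u$ for every $t\ge 0$. The left-hand side converges in $L^2(\Omega;V)$ to $Z'_\infty\sim\mu_\infty$ by the proof of \autoref{prop:limiting_measure}, and the right-hand side converges in $L^2(\Omega;V)$ to $x_\infty$ by Step~1. Equality in law is preserved in the limit, hence $Law(x_\infty)=\mu_\infty$.

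\emph{Step 3: representation of the solution.} Since $S(t)$ is a bounded linear operator on $V$, it commutes with the stochastic integral on elementary integrands and, by the extension of the isometry, on every integrand in $\L_2(U,\mathcal{D}(\R;V))$. Hence $S(t)x_n = \int_{-n}^0 S(t-r)\varPhi\d B_r$, and letting $n\to\infty$ in $L^2(\Omega;V)$ yields $S(t)x_\infty=\int_{-\infty}^0 S(t-r)\varPhi\d B_r$. Combined with the additivity of the definite integral,
\[ X_t^{x_\infty} \;=\; S(t)x_\infty + \int_0^t S(t-r)\varPhi\d B_r \;=\; \int_{-\infty}^t S(t-r)\varPhi\d B_r \;=:\; Y_t \]
in $L^2(\Omega;V)$ for every $t\ge 0$, so in particular $Law(X_t^{x_\infty})=Law(Y_t)$.

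\emph{Step 4: strict stationarity.} Fix $0\le t_1<\ldots<t_n$ and $h\ge 0$. After a change of variable $r\mapsto r-h$,
\[ Y_{t_i+h} = \int_{-\infty}^{t_i} S(t_i-s)\varPhi\d B_{s+h}, \qquad i=1,\ldots,n. \]
On the finite-time truncations $Y^{(N)}_t := \int_{-N}^t S(t-r)\varPhi\d B_r$, obtained by step-function approximation of the integrand as in the proof of \autoref{lem:law_of_integrals}, both joint laws $Law(Y^{(N)}_{t_1+h},\ldots,Y^{(N)}_{t_n+h})$ and $Law(Y^{(N)}_{t_1},\ldots,Y^{(N)}_{t_n})$ reduce to joint laws of finite linear combinations of the increments $B_{u_j+h}-B_{v_j+h}$ and $B_{u_j}-B_{v_j}$, which coincide by the stationarity of increments of $B$ in the strong sense of \autoref{def:stationarity_reflexivity}. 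Letting $N\to\infty$, the $L^2(\Omega;V)$-convergence $Y^{(N)}_{t_i}\to Y_{t_i}$ preserves the equality of joint laws, proving that $(X_t^{x_\infty})_{t\ge 0}$ is strictly stationary with one-dimensional marginal $\mu_\infty$.

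The main technical hurdle is Step~4: promoting the scalar, single-time identity of \autoref{lem:law_of_integrals} to a multi-time, semi-infinite joint-law analogue. This is where the stronger form of stationary increments in \autoref{def:stationarity_reflexivity} (permitting non-equidistant partitions with increments of different lengths) is genuinely needed, together with a careful double approximation --- finite truncation $[-N,t_i]$ in time and step-function approximation in the spatial variable.
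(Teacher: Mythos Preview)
Your proof is correct and follows essentially the same route as the paper: define $x_\infty$ as the $L^2$-limit of the backward integrals $\int_{-n}^0 S(-r)\varPhi\d B_r$, identify its law with $\mu_\infty$ via \autoref{lem:law_of_integrals}, represent $X_{t}^{x_\infty}$ as $\lim_{n\to\infty}\int_{-n}^{t}S(t-r)\varPhi\d B_r$, and then use stationarity of increments after the substitution $r\mapsto r-h$ on the truncated integrals. The paper is a bit more terse in your Step~3 (it simply combines $S(t+h)x_\infty$ and $Z_{t+h}$ into the single limit $\lim_n\int_{-n}^{t+h}S(t+h-u)\varPhi\d B_u$) and in Step~4, but the argument is the same.
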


\begin{proof}
Denote
	\begin{equation*}
		Z_t'' := \int_{-t}^0S(-u)\varPhi\d B_u.
	\end{equation*}
In a similar manner as in \autoref{lem:law_of_integrals}, it can be inferred that 
	\begin{equation*}
		Law(Z''_t) = Law (Z_t) = \mu_t^0, \quad t\geq 0, 
	\end{equation*}
and following the lines of the proof of \autoref{prop:limiting_measure}, we can show that there is $V$-valued random variable $x_\infty$ such that $Z''_t\rightarrow x_\infty$ in $L^2(\Omega;V)$ as $t\rightarrow\infty$. Clearly, the probability law of $x_\infty$ is $\mu_\infty$. Now, let $t, h\geq 0$. Then we have that 
	\begin{align*}
		X_{t+h}^{x_\infty}& = S(t+h)x_\infty+Z_{t+h}\\
			& = \lim_{n\rightarrow\infty}\left(\int_{-n}^0S(t+h-u)\varPhi\d B_u+\int_0^{t+h}S(t+h-u)\varPhi\d B_u\right)\\
			& = \lim_{n\rightarrow\infty}\int_{-n}^{t+h}S(t+h-u)\varPhi\d B_u.
	\end{align*}
Notice that 
	\begin{equation*}
		X_{t+h}^{x_\infty} = \lim_{n\rightarrow\infty}\int_{-n-h}^{t}S(t-v)\varPhi\d B_{v+h} = \lim_{n\rightarrow\infty}\int_{-n}^tS(t-v)\varPhi\d B_{v+h},
	\end{equation*}
since if $n\rightarrow\infty$, then $h+n\rightarrow\infty$ for all $h\geq 0$. Let $k\in\mathbb{N}$ and $t_1, \ldots, t_k\geq 0$ be arbitrary times. Next we show that
	\begin{equation*}
		Law\left(X_{t_1}^{x_\infty}, \ldots, X_{t_k}^{x_\infty}\right) = Law\left(X_{t_1+h}^{x_\infty}, \ldots, X_{t_k+h}^{x_\infty}\right)
	\end{equation*}
for all $h\geq 0$. Consider
	\begin{align*}
		Law\left(X_{t_1+h}^{x_\infty}, \ldots, X_{t_k+h}^{x_\infty}\right) & = \\
			& \hspace{-2cm} = \limweak_{n\rightarrow\infty} Law\left(\int_{-n}^{t_1+h}S(t_1+h-u)\varPhi\d B_u, \ldots, \int_{-n}^{t_k+h}S(t_k+h-u)\varPhi\d B_u\right)\\
			& \hspace{-2cm} = \limweak_{n\rightarrow\infty} Law\left(\int_{-n-h}^{t_1}S(t_1-v)\varPhi\d B_{v+h}, \ldots, \int_{-n-h}^{t_k}S(t_k-v)\varPhi\d B_{v+h}\right)\\
			& \hspace{-2cm} = \limweak_{n\rightarrow\infty} Law\left(\int_{-n}^{t_1}S(t_1-v)\varPhi\d B_{v+h}, \ldots, \int_{-n}^{t_k}S(t_k-v)\varPhi\d B_{v+h}\right)\\
			& \hspace{-2cm} = \limweak_{n\rightarrow\infty} Law\left(\int_{-n}^{t_1}S(t_1-v)\varPhi\d B_{v}, \ldots, \int_{-n}^{t_k}S(t_k-v)\varPhi\d B_{v}\right)\\
			& \hspace{-2cm} = Law\left(X_{t_1}^{x_\infty}, \ldots, X_{t_k}^{x_\infty}\right).
	\end{align*}
The fourth equality follows from the fact that $B$ has stationary increments.
\end{proof}

\section{Examples}
\label{sec:examples}

\begin{example}
\label{ex:non_exp_stable}
Let $U=\R$, $V=L^2(0,\infty)$ and $A$ be the infinitesimal generator of the left-shift semigroup $(S(t),t\geq 0)$, i.e. $A:\Dom(A):=W^{1,2}(0,\infty)\rightarrow L^2(0,\infty)$ is given by $Af:=f^\prime$ and generates the $C_0$-semigroup $(S(t),t\geq 0)$ on $L^2(0,\infty)$ given by 
	\begin{equation*}
		S(t)g(\xi)=g(t+\xi)
	\end{equation*}
for $g\in L^2(0,\infty)$, $t\geq 0$ and $\xi\in[0,\infty)$. Let $W^H=(W^H_t, t\in\R)$ be a scalar fBm with a fixed $H>1/2$ and let $\varphi:[0,\infty)\rightarrow\R$ be given by $\varphi(\xi):= (\xi+1)^{-\beta}$ with some $\beta>1/2$. Let $\varPhi_{\varphi}\in\L(\R,L^2(0,\infty))$ be given by $\varPhi_\varphi(c)(\xi):= c\varphi(\xi)$ for $c\in\R$. Consider the following equation:
	\begin{equation}
	\label{eq:SDE_phi}
		\d{X}_t = AX_t\d{t} + \varPhi_\varphi\d{W}^H_t, \quad t\geq 0.
	\end{equation}
Denote the solution to \eqref{eq:SDE_phi} with $X_0=x\in L^2(\Omega;L^2(0,\infty))$ by $X^x_\varphi$.

If $\beta>H+1/2$, we have that 
	\begin{equation*}
		\int_0^\infty\left(\int_0^\infty(\xi+r+1)^{-2\beta}\d{\xi}\right)^\frac{1}{2H}\d{r} <\infty.
	\end{equation*}
Hence, by \autoref{prop:limiting_measure}, that there is a limiting measure $\mu_\infty$ for $X^0_\varphi$. Furthermore, by \autoref{prop:strict_stationarity}, there is a $x_\infty\in L^2(\Omega;L^2(0,\infty))$ such that the solution $X^{x_\infty}_\varphi$ is strictly stationary. Notice that for every $z\in L^2(0,\infty)$ the function $\zeta_z: [0,\infty)\rightarrow [0,\infty)$ defined by 
	\begin{equation*}
		\zeta_z(t):= \left(\int_0^\infty|z(t+\xi)|^2\d{\xi}\right)^\frac{1}{2}
	\end{equation*}
is non-increasing (since the integrand is non-negative) and it holds that $\lim_{t\rightarrow\infty}\zeta_z(t)=0$ (i.e. the semigroup $(S(t),t\geq 0)$ is strongly stable). Hence, by \autoref{prop:limiting_measure_2}, we have that for every initial condition $x\in L^2(\Omega;L^2(0,\infty))$, the limiting measure for $X^x_\varphi$ to \eqref{eq:SDE_phi} exists and equals $\mu_\infty$. Moreover, we have 
	\begin{equation*}
		\sup_{t\geq 0}\mathrm{Tr}\,q_t = H(2H-1) \int_0^\infty\int_0^\infty \left(\int_0^\infty(u+\xi+1)^{-\beta}(v+\xi+1)^{-\beta}\d{\xi}\right)|u-v|^{2H-2}\d{u}\d{v} <\infty.
	\end{equation*}

Let us look closer at the integral $J(\beta,H)$:
	\begin{align*}
 J(\beta,H) & := \int_0^\infty\int_0^\infty\left(\int_0^\infty(u+\xi+1)^{-\beta}(v+\xi+1)^{-\beta}\d{\xi}\right)|u-v|^{2H-2}\d{u}\d{v} \\
		& = 2\int_0^\infty\int_1^\infty (u+\xi)^{-\beta}\left(\int_0^u(v+\xi)^{-\beta}(u-v)^{2H-2}\d{v}\right)\d{\xi}\d{u}
	\end{align*}
using the Tonelli theorem. Since
	\begin{equation*}
		\int_0^u(v+\xi)^{-\beta}(u-v)^{2H-2}\d{v} = \xi^{-\beta}u^{2H-1}\int_0^1\left(1+z\frac{u}{\xi}\right)^{-\beta}(1-z)^{2H-2}\d{z},
	\end{equation*}
we have 
	\begin{align*}
		J(\beta,H) & = \\
		& \hspace{-11mm} = 2\int_1^\infty \xi^{2H-2\beta-1}\left(\int_0^\infty \left(1+\frac{u}{\xi}\right)^{-\beta}\left(\frac{u}{\xi}\right)^{2H-1}\left(\int_0^1\left(1+z\frac{u}{\xi}\right)^{-\beta}(1-z)^{2H-2}\d{z}\right)\d{u}\right)\d{\xi}\\
		& \hspace{-11mm} = 2\left[\int_1^\infty \xi^{2H-2\beta}\d{\xi}\right]\left[\int_0^1(1-z)^{2H-2}\left(\int_0^\infty (1+y)^{-\beta}y^{2H-1}(1+zy)^{-\beta}\d{y}\right)\d{z}\right].\numberthis\label{eq:chain}\\
\intertext{Clearly, $J(\beta,H)= \infty$ if $\beta\leq H+1/2$. If $\beta>H+1/2$, the first integral is finite and we can continue the chain \eqref{eq:chain} as}
		& \hspace{-11mm} = \frac{2\Gamma(2H)\Gamma(2\beta-2H)}{\Gamma(2\beta)}\left(\int_1^\infty \xi^{2H-2\beta}\d{\xi}\right)\left(\int_0^1(1-z)^{2H-2}{_2F_1}(\beta,2H,2\beta,1-z)\d{z}\right)
	\end{align*}
where $\Gamma$ is the Gamma function and $_2F_1$ is the Gauss hypergeometric function. The last equality follows by formula (1.6.7) on p. 20 in \cite{Slat66} which can be used if $\beta>H$. The second integral can be written as 
	\begin{equation*}
		\int_0^1 z^{2H-2}{_2F_1}(\beta,2H,2\beta,z)\d{z}.
	\end{equation*}
If $\beta>2H$, $_2F_1(\beta,2H,2\beta,z)$ is finite at $z=1$ (see \cite[Formula (15.4.20) on p. 387]{NIST10}). If $\beta=2H$, it holds that 
	\begin{equation*}
		\lim_{z\rightarrow 1-}\frac{_2F_1(\beta,2H, 2\beta,z)}{-\log(1-z)} = \frac{\Gamma(2H+\beta)}{\Gamma(2H)\Gamma(\beta)}.
	\end{equation*}
(see \cite[Formula (15.4.21) on p. 387]{NIST10}) and if $\beta<2H$, it holds that 
\begin{equation*}
		\lim_{z\rightarrow 1-}\frac{_2F_1(\beta, 2H,2\beta,z)}{(1-z)^{\beta-2H}}=\frac{\Gamma(2\beta)\Gamma(2H-\beta)}{\Gamma(\beta)\Gamma(2H)}
	\end{equation*}	
(see \cite[Formula (15.4.23) on p. 387]{NIST10}). Therefore, if $\beta>H$, the integral converges. Hence, we have that $J(\beta,H)<\infty$ if and only if $\beta > H+1/2$ and it follows that
	\begin{equation*}
		J(\beta,H) <\infty \quad\mbox{ if and only if }\quad \int_0^\infty \left(\int_0^\infty|\varphi(\xi+r)|^2\d{\xi}\right)^\frac{1}{2H}\d{r} <\infty.
	\end{equation*}
Hence, if $\beta\leq H+1/2$, we have that $J(\beta,H)=\infty$ and consequently, $\sup_{t\geq 0}\mathrm{Tr}\,q_t=\infty$. By \autoref{prop:necessary_condition}, there cannot be a limiting measure for the solution $X^\varphi$. Now, recall that for this equation driven by the one-dimensional Wiener process, the necessary and sufficient condition for the existence of a limiting measure is
	\begin{equation*}
		\int_0^\infty \int_0^\infty |\varphi(\xi+r)|^2\d{\xi}\d{r} <\infty
	\end{equation*}
(see e.g. \cite[Example 11.9]{DaPratoZab92}). Hence, if we choose $\beta\in (1, H+1/2]$, we obtain, that the solution \eqref{eq:SDE_phi} driven by the Wiener process does admit a limiting measure, whereas if the driving noise is the fBm of $H>1/2$, it does not.
\end{example}

\begin{example}
Consider the stochastic heat equation on an open bounded domain $\O\subset\R^d$ with $\mathcal{C}^1$ boundary perturbed by an additive space-time noise $\eta$ which is Rosenblatt (with a given parameter $H\in (1/2,1)$) in time and white in space, i.e.
	\begin{equation*}
		\partial_t u = \Delta u + \eta
	\end{equation*}
on ${\R}_+\times\O$ with the Dirichlet boundary condition $u=0$ on ${\R}_+\times\partial\O$. This problem can be formulated in terms of the stochastic evolution equation \eqref{eq:SDE}. Define \[\Dom(A):= W^{2,2}(\O)\cap W^{1,2}_0(\O)\] and take $A:=\Delta|_{\Dom(A)}$, $V:=L^2(\O)$. Let $\{e_n\}$ be a complete orthonormal basis of $U:=L^2(\O)$ and let $\{R^{(n)}\}$ be a sequence of independent copies of the two-sided Rosenblatt process $R^H$ with a fixed $H\in (1/2,1)$. Define 
	\begin{equation*}
		B_t(g) := \sum_{n=1}^\infty\langle g, e_n\rangle_{L^2(\O)}R^{(n)}_t
	\end{equation*}
for $g\in L^2(\O)$ and $t\in\R$. Then $B=(B_t, t\in\R)$ is an $L^2(\O)$-cylindrical $\alpha$-regular Volterra process ($\alpha:= H-1/2$) with stationary and reflexive increments. Assume that $\varPhi\in\L(L^2(\O))$. Formally, we can write that
	\begin{equation*}
		\eta(t,\xi) = [\varPhi\dot{B}_t](\xi)
	\end{equation*}
for $(t,\xi)\in{\R}_+\times\O$. The operator $A$ generates a strongly continuous, exponentially stable semigroup $(S(t), t\geq 0)$ on the space $L^2(\O)$ and standard estimates on its Green kernel yield
	\begin{equation*}
		\|S(r)\varPhi\|_{\L_2(L^2(\O))} \lesssim r^{-\frac{d}{4}}
	\end{equation*}
for $r>0$ (see \cite[\S 3, Theorem 2]{Arima64}). Hence, if $d<4H$, the convolution integral 
	\begin{equation*}
		Z_t = \int_0^tS(t-r)\varPhi\d{B}_r, \quad t\geq 0,
	\end{equation*}
is well-defined and has values in $L^2(\O)$ since in this case, \ref{hyp:H} holds. In particular, for $d=1,2$, there is no restriction on $H\in (1/2,1)$ and if $d=3$, the parameter $H$ has to be greater than $3/4$. Since the Rosenblatt process has stationary and reflexive increments and since we assume that $\{R^{(n)}\}$ are independent, the process $B$ has stationary and reflexive increments. Moreover, we can write 
	\begin{align*}
		\int_0^\infty \|S(u)\varPhi\|_{\L_2(L^2(\O))}^\frac{2}{1+2\alpha}\d{r} & =  \int_0^{T_0}\|S(u)\varPhi\|_{\L_2(L^2(\O))}^\frac{2}{1+2\alpha}\d{r} +\\ & \hspace{2cm}+\int_{T_0}^\infty \|S(u-T_0)\|_{\L(L^2(\O))}^\frac{2}{1+2\alpha}\|S(T_0)\varPhi\|_{\L_2(L^2(\O))}^\frac{2}{1+2\alpha}\d{r}
	\end{align*}
which is finite by \ref{hyp:H} and exponential stability of $(S(t), t\geq 0)$. Exponential stability of the semigroup $(S(t), t\geq 0)$ implies its strong stability and hence, we may appeal to \autoref{prop:limiting_measure_2} and infer that for each $x\in L^2(\Omega;L^2(\O))$, the solution 
	\begin{equation*}
		X^x_t=S(t)x+Z_t, \quad t\geq 0,
	\end{equation*}
admits a limiting measure $\mu_\infty$. Moreover, there is a random variable $x_\infty$ (whose distribution is $\mu_\infty$) such that the solution $X^{x_\infty}$ is a strictly stationary process by \autoref{prop:strict_stationarity}.
\end{example}

ACKNOWLEDGEMENT: I wish to thank Bohdan Maslowski and Martin Ondrej\'{a}t for our fruitful discussions and for their valuable suggestions.

\bibliographystyle{siamplain}
\bibliography{references}

\end{document}